\documentclass[11pt]{article}
\oddsidemargin 0.0in
\evensidemargin 0.0in
\topmargin -0.5in
\textwidth 6.5in
\textheight 9.0in
\usepackage{amssymb,url,amsmath,graphicx, amsthm,mathdots,bm}
\usepackage{accents}
\usepackage{tikz}
\usetikzlibrary{positioning, calc, chains}
\usetikzlibrary{shapes.misc}
\usetikzlibrary{shapes.symbols}
\usetikzlibrary{shapes.geometric}
\usetikzlibrary{shapes.arrows}
\usetikzlibrary{fit}
\usetikzlibrary{shadows}

\usepackage{pgfplots}
\usepackage{pgfplotstable}

\usepackage{subcaption}

\newtheorem{theorem}{Theorem}[section]

\newtheorem{lemma}{Lemma}[theorem]
\newtheorem{remark}[theorem]{Remark}

\newtheorem{corollary}[theorem]{Corollary}

\newcommand{\citep}[1]{\cite{#1}}
\newcommand{\bfc}{\mathbf{c}}

\newcommand{\bfb}{\mathbf{b}}
\newcommand{\bfp}{\mathbf{p}}
\newcommand{\bfv}{\mathbf{v}}
\newcommand{\bfw}{\mathbf{w}}
\newcommand{\bfq}{\mathbf{q}}
\newcommand{\bfz}{\mathbf{z}}
\newcommand{\bfy}{\mathbf{y}}
\newcommand{\bfx}{\mathbf{x}}

\newcommand{\bfalpha}{\mathbf{\alpha}}
\newcommand{\bfbeta}{\mathbf{\beta}}
\newcommand{\bfpi}{\mathbf{\Pi}}
\newcommand{\bftheta}{\mathbf{\Theta}}

\usepackage{algorithmic}
\usepackage{algorithm}

\DeclareMathOperator{\Bez}{Bez}
\DeclareMathOperator{\diag}{diag}
\DeclareMathOperator{\Span}{Span}

\title{Structured inversion of the Bernstein--Vandermonde matrix}
\date{}
\author{Larry Allen\thanks{Department of Mathematics, Baylor
    University; One Bear Place \#97328; Waco, TX 76798-7328.
    Email: larry\_alllen@baylor.edu.}
    \and
    Robert C.~Kirby\thanks{Department of Mathematics, Baylor
    University; One Bear Place \#97328; Waco, TX 76798-7328.
    Email: robert\_kirby@baylor.edu.}  
	}

\begin{document}

\maketitle

\begin{abstract}
Bernstein polynomials, long a staple of approximation theory and computational geometry, have also increasingly become of interest in finite element methods. Many fundamental problems in interpolation and approximation give rise to interesting linear algebra questions. When attempting to find a polynomial approximation of boundary or initial data, one encounters the Bernstein--Vandermonde matrix, which is found to be highly ill-conditioned. In \cite{allenkirby2020mass}, we used the relationship between monomial B\'{e}zout matrices and the inverse of Hankel matrices to obtain a decomposition of the inverse of the Bernstein mass matrix in terms of Hankel, Toeplitz, and diagonal matrices. In this paper, we use properties of the Bernstein--B\'{e}zout matrix to factor the inverse of the Bernstein--Vandermonde matrix into a difference of products of Hankel, Toeplitz, and diagonal matrices. We also use the nonstandard matrix norm defined in \cite{allenkirby2020mass} to study the conditioning of the Bernstein--Vandermonde matrix, showing that the conditioning in this case is better than in the standard 2-norm. Additionally, we use properties of multivariate Bernstein polynomials to derive a block $LU$ decomposition of the Bernstein--Vandermonde matrix corresponding to equispaced nodes on the $d$-simplex.
\end{abstract}



\section{Introduction}

Given data $\{f_j\}_{j=0}^n$ and distinct nodes $\{x_j\}_{j=0}^n$, the interpolation problem consists of finding a polynomial $p$ of degree $n$ that satisfies
\begin{equation}
p(x_j)=f_j
\end{equation}
for each $0\leq j\leq n$. If a basis for the space of polynomials of degree at most $n$ is chosen, then the interpolation problem can be expressed as a system of linear equations, where the coefficient matrix is a Vandermonde-like matrix \cite{gautschi1999orthogonal}. Recently, Bernstein polynomials have been considered as a tool for high-order approximation of partial differential equations via the finite element method \cite{ainsworth2011bernstein,duffy1982quadrature}, and the interpolant is often used as a polynomial approximation of initial or boundary data. The corresponding Bernstein--Vandermonde matrix is found to be highly ill-conditioned \cite{davis1975condition}, but the structure of the matrix has led to fast algorithms that avoid some of the issues that arise from the ill-conditioning. For example, Marco and Mart\'{i}nez \cite{marco2007bidiagonal} used the fact that the Bernstein--Vandermonde matrix is strictly totally positive \cite{gasca1992positivity} to obtain a bidiagonal factorization of the inverse, and Ainsworth and Sanchez \cite{ainsworth2016newton} adapted the standard divided difference algorithm for the monomial Vandermonde matrix \cite{burden2012difference} to the Bernstein basis.

In \cite{allenkirby2020mass}, we used the relationship between monomial B\'{e}zout matrices and the inverse of Hankel matrices described by Heinig and Rost \cite{heinig1984algebraic} to obtain a decomposition of the inverse of the Bernstein mass matrix in terms of Hankel, Toeplitz, and diagonal matrices. In this paper, we generalize an argument made by Kaplan \cite{kaplan2006bezout} to obtain a decomposition of the inverse of Vandermonde-like matrices in terms of their transpose, a diagonal matrix, and the corresponding B\'{e}zout matrix. When applied to the Bernstein basis, this gives a decomposition of the inverse of the Bernstein--Vandermonde matrix in terms of Hankel, Toeplitz, and diagonal matrices, which in turn leads to a fast algorithm for solving the interpolation problem. Additionally, we use the nonstandard matrix norm defined in \cite{allenkirby2020mass} to give an explanation for the relatively good performance of the Bernstein--Vandermonde matrix despite its massive condition number.

Bernstein polynomials also extend naturally to give a basis for multivariate polynomials of total degree $n$. Properties of Bernstein polynomials lead to special recursive blockwise-structure for finite element matrices \cite{kirby2017fast,kirby2012fast}. In this paper, we use this structure to obtain a block $LU$ decomposition of the Bernstein--Vandermonde matrix associated to equispaced nodes on the $d$-simplex.

\section{Inverse Formulas}
\label{sec:inverse}
\subsection{General Formulas}
\label{ssec:general}

For integers $m,n\geq 0$, let $\{ b^n_j(x) \}_{j=0}^n$ be a basis for the space of univariate polynomials of degree at most $n$, and let $\bfx\in\mathbb{R}^{m+1}$ with $\bfx_i<\bfx_{i+1}$ for each $0\leq i<m$, where boldface is used to distinguish the vector of nodes $\bfx$ from the indeterminate $x$ (a similar convention will be used to distinguish a polynomial $v$ from its vector of coefficients $\bfv$). The Vandermonde matrix $V^n(\bfx)$ associated to $\{ b^n_j(x) \}_{j=0}^n$ and $\bfx$ is the $(m+1)\times(n+1)$ matrix given by 
\begin{equation}
\label{eq:vandermonde}
V^n_{ij}(\bfx)=b^n_j(\bfx_i).
\end{equation} 
We consider the case $m=n$ so that $V^n(\bfx)$ is invertible. 

If $v$ and $w$ are polynomials of degree at most $n+1$ expressed in the basis $\{b^{n+1}_j(x)\}_{j=0}^{n+1}$, then the B\'{e}zout matrix generated by $v$ and $w$, denoted $\Bez(v,w)$, is the $(n+1)\times (n+1)$ matrix whose entries satisfy
\begin{equation}
\label{eq:bezout}
\frac{v(s)w(t)-v(t)w(s)}{s-t} = \sum_{i,j=0}^n \Bez_{ij}(v,w)b^n_i(s)b^n_j(t).
\end{equation}
In \cite{kaplan2006bezout}, it was shown that if the monomial basis is used, then the inverse of a B\'{e}zout matrix is a Hankel matrix. As a consequence of the arguments involved, one obtains a formula for the inverse of the monomial Vandermonde matrix in terms of its transpose, a diagonal matrix, and a particular B\'{e}zout matrix. Since none of the arguments require anything specific to the monomial basis, we can generalize the arguments to any basis, which we summarize here.

For $t\in\mathbb{R}$, let $\bfb^n(t)$ denote the column vector $(b^n_0(t),\dots,b^n_n(t))^T$. As a consequence of (\ref{eq:bezout}), if $s\neq t$, then
\begin{align*}
(\bfb^n(s))^T\Bez(v,w)\bfb^n(t) 
&= \sum_{i,j=0}^n \Bez_{ij}(v,w)b^n_i(s)b^n_j(t) \\
&= \frac{v(s)w(t)-v(t)w(s)}{s-t}.
\end{align*}
This implies that
\begin{align*}
(\bfb^n(t))^T\Bez(v,w)\bfb^n(t)
&= \lim_{\varepsilon\rightarrow 0} \left[ (\bfb^n(t))^T\Bez(v,w)\bfb^n(t+\varepsilon) \right] \\
&= \lim_{\varepsilon\rightarrow 0} \frac{v(t+\varepsilon)w(t)-v(t)w(t+\varepsilon)}{\varepsilon} \\
&= v'(t)w(t)-v(t)w'(t).
\end{align*}
Therefore, if $v$ has simple zeros at $\bfx_i$ and $w(\bfx_i)\neq 0$ for each $0\leq i\leq n$, then
\begin{equation}
V^n(\bfx)\Bez(v,w)\left(V^n(\bfx)\right)^T = \diag(v'(\bfx_j)w(\bfx_j))_{j=0}^n,
\end{equation}
and so we have the following:

\begin{theorem}
\label{thm:bezout}
The inverse of $V^n(\bfx)$ is given by
\begin{equation}
\left(V^n(\bfx)\right)^{-1} = \Bez(v,w)\left(V^n(\bfx)\right)^T\diag\left(\frac{1}{v'(\bfx_j)w(\bfx_j)}\right)_{j=0}^n,
\end{equation}
where $v$ and $w$ are any polynomials of degree $n+1$ satisfying $v(\bfx_i)=0$, $v'(\bfx_i)\neq 0$, and $w(\bfx_i)\neq 0$ for each $0\leq i\leq n$.
\end{theorem}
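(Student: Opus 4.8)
The statement follows almost immediately from the matrix identity displayed just before it, so the plan is to make that identity precise and then rearrange. First I would compute the entries of the product $V^n(\bfx)\Bez(v,w)\left(V^n(\bfx)\right)^T$ directly. Since $V^n_{ij}(\bfx) = b^n_j(\bfx_i)$, the $i$th row of $V^n(\bfx)$ is exactly $(\bfb^n(\bfx_i))^T$, and hence the $(i,j)$ entry of the triple product is the scalar $(\bfb^n(\bfx_i))^T\Bez(v,w)\bfb^n(\bfx_j)$.

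Next I would evaluate this scalar using the two formulas established above. For $i \neq j$ the identity
\[
(\bfb^n(\bfx_i))^T\Bez(v,w)\bfb^n(\bfx_j) = \frac{v(\bfx_i)w(\bfx_j) - v(\bfx_j)w(\bfx_i)}{\bfx_i - \bfx_j}
\]
applies, and since $v(\bfx_i) = v(\bfx_j) = 0$ by hypothesis the numerator vanishes, so every off-diagonal entry is zero. For $i = j$ the limiting formula gives
\[
(\bfb^n(\bfx_i))^T\Bez(v,w)\bfb^n(\bfx_i) = v'(\bfx_i)w(\bfx_i) - v(\bfx_i)w'(\bfx_i) = v'(\bfx_i)w(\bfx_i),
\]
again using $v(\bfx_i) = 0$. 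Collecting these entries yields the matrix identity
\[
V^n(\bfx)\Bez(v,w)\left(V^n(\bfx)\right)^T = \diag\left(v'(\bfx_j)w(\bfx_j)\right)_{j=0}^n.
\]

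Finally I would invert. The hypotheses $v'(\bfx_j) \neq 0$ and $w(\bfx_j) \neq 0$ guarantee that each diagonal entry is nonzero, so the diagonal matrix $D := \diag(v'(\bfx_j)w(\bfx_j))_{j=0}^n$ is invertible with $D^{-1} = \diag(1/(v'(\bfx_j)w(\bfx_j)))_{j=0}^n$. Multiplying the identity on the right by $D^{-1}$ gives $V^n(\bfx)\left[\Bez(v,w)\left(V^n(\bfx)\right)^T D^{-1}\right] = I$, exhibiting a right inverse of $V^n(\bfx)$. Because the assumption $m = n$ makes $V^n(\bfx)$ a square invertible matrix, this right inverse coincides with the two-sided inverse, which is precisely the claimed formula.

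There is no genuine obstacle here: all of the analytic content lives in the limit computation already carried out before the statement, and what remains is algebraic rearrangement. The only points requiring care are bookkeeping ones — correctly identifying the rows of $V^n(\bfx)$ with the vectors $\bfb^n(\bfx_i)$, and noting that a right inverse of a square invertible matrix is automatically its two-sided inverse, so that one need not separately verify the product in the opposite order.
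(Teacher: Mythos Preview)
Your proposal is correct and follows essentially the same route as the paper: the paper establishes the identity $V^n(\bfx)\Bez(v,w)(V^n(\bfx))^T = \diag(v'(\bfx_j)w(\bfx_j))_{j=0}^n$ via exactly the off-diagonal and diagonal computations you describe, and then simply states the theorem as an immediate consequence. Your write-up is slightly more explicit about the final inversion step and the right-inverse-equals-inverse observation, but the argument is the same.
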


\begin{corollary}
\label{cor:bezout}
The inverse of $V^n(\bfx)$ is given by
\begin{equation}
\left(V^n(\bfx)\right)^{-1} = \Bez(v,1)\left(V^n(\bfx)\right)^T\diag\left(\frac{1}{v'(\bfx_j)}\right)_{j=0}^n,
\end{equation}
where $v$ is any polynomial of degree $n+1$ that has simple zeros at $\bfx_i$ for each $0\leq i\leq n$.
\end{corollary}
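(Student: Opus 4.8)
The plan is to obtain the corollary as the special case $w \equiv 1$ of Theorem~\ref{thm:bezout}. First I would observe that, although the theorem is phrased for $v$ and $w$ of degree $n+1$, the derivation preceding it uses only that $v$ has simple zeros at the nodes $\bfx_i$ and that $w(\bfx_i) \neq 0$ for each $0 \leq i \leq n$. The constant polynomial $w \equiv 1$ has $w(\bfx_i) = 1 \neq 0$, so it is an admissible choice, and the hypothesis on $v$ is exactly the requirement stated in the corollary. Thus the essential content is to verify that the specialization $w \equiv 1$ is meaningful and then to read off how the diagonal factor simplifies.

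Next I would confirm that $\Bez(v,1)$ is well defined with the correct dimensions. Since $w \equiv 1$ has degree $0 \leq n+1$, it lies within the scope of the B\'{e}zout definition (\ref{eq:bezout}), and the generating expression reduces to $\frac{v(s) - v(t)}{s - t}$. Because $v$ has degree $n+1$, this quotient is a polynomial of degree $n$ in each of $s$ and $t$, so it expands uniquely in the products $b^n_i(s) b^n_j(t)$ and determines an $(n+1)\times(n+1)$ matrix, exactly as required.

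Finally I would substitute $w \equiv 1$ into the key identity established before the theorem. Since $w' \equiv 0$, we have
\begin{equation*}
(\bfb^n(t))^T \Bez(v,1) \bfb^n(t) = v'(t) w(t) - v(t) w'(t) = v'(t),
\end{equation*}
and evaluating at the nodes gives $V^n(\bfx)\,\Bez(v,1)\,(V^n(\bfx))^T = \diag(v'(\bfx_j))_{j=0}^n$. Because $v$ has simple zeros, each $v'(\bfx_j) \neq 0$, so this diagonal matrix is invertible; rearranging then yields the stated formula. The only point requiring any care is the bookkeeping in the first paragraph---namely recognizing that the degree hypothesis on $w$ in the theorem is not actually used---after which the result is immediate.
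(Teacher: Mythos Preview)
Your proposal is correct and matches the paper's approach: the corollary is stated immediately after Theorem~\ref{thm:bezout} with no separate proof, the implicit argument being precisely the specialization $w\equiv 1$ that you carry out. Your extra care in noting that the degree-$n{+}1$ hypothesis on $w$ is not actually used (so that the constant $1$, which has degree at most $n{+}1$, is admissible) is a reasonable clarification of what the paper leaves implicit.
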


\subsection{Bernstein--Vandermonde}
\label{ssec:BernsteinVandermonde}

We now focus on the case where $0\leq \bfx_i\leq 1$ for each $0\leq i\leq n$ and the basis consists of the polynomials $\{B^n_j(x)\}_{j=0}^n$, where
\begin{equation}
\label{eq:bernstein}
B^n_j(x) = \binom{n}{j}x^j(1-x)^{n-j}
\end{equation}
are the Bernstein polynomials of degree $n$. In order to apply Theorem~\ref{thm:bezout} or Corollary~\ref{cor:bezout}, we need a method for computing the entries of the Bernstein--B\'{e}zout matrix; in fact, they satisfy a recurrence relation involving the Bernstein coefficients of the polynomials.

\begin{theorem}
\label{thm:recursion}
If $v(t)=\sum_{i=0}^{n+1} \bfv_iB^{n+1}_i(t)$ and $w(t)=\sum_{i=0}^{n+1} \bfw_iB^{n+1}_i(t)$ are polynomials of degree at most $n+1$, then the entries $b_{ij}$ of the Bernstein--B\'{e}zout matrix $\Bez(v,w)$ generated by $v$ and $w$ satisfy
\begin{equation}
\label{eq:recursion}
b_{ij} = \frac{1}{(i+1)(n-j+1)}\left[j(n-i)b_{i+1,j-1}+(n+1)^2\left( \bfv_{i+1}\bfw_j-\bfv_j\bfw_{i+1}\right)\right].
\end{equation}
\end{theorem}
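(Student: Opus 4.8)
The plan is to clear the denominator in \eqref{eq:bezout} and turn the defining relation into a single polynomial identity in $s$ and $t$, which can then be read off coefficient-by-coefficient in a convenient basis. Since the numerator $v(s)w(t)-v(t)w(s)$ vanishes on the diagonal $s=t$, multiplying \eqref{eq:bezout} through by $(s-t)$ yields the polynomial identity
\[
v(s)w(t)-v(t)w(s) = (s-t)\sum_{i,j=0}^n b_{ij}B^n_i(s)B^n_j(t),
\]
valid for all $s,t$. Both sides are polynomials of degree at most $n+1$ in each of $s$ and $t$, so they lie in the span of the products $\{B^{n+1}_k(s)B^{n+1}_l(t)\}_{k,l=0}^{n+1}$. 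Because $\{B^{n+1}_k\}_{k=0}^{n+1}$ is a basis for univariate polynomials of degree at most $n+1$, these products are linearly independent, and the recurrence will emerge by matching their coefficients on the two sides.

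For the left-hand side, I would substitute the Bernstein expansions of $v$ and $w$ and relabel the summation indices to obtain
\[
v(s)w(t)-v(t)w(s) = \sum_{k,l=0}^{n+1}\bigl(\bfv_k\bfw_l-\bfv_l\bfw_k\bigr)B^{n+1}_k(s)B^{n+1}_l(t),
\]
which already isolates the term $\bfv_{i+1}\bfw_j-\bfv_j\bfw_{i+1}$ appearing in \eqref{eq:recursion}.

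The core of the computation is expanding the right-hand side using two standard Bernstein identities: the multiply-by-$x$ rule $xB^n_i(x)=\tfrac{i+1}{n+1}B^{n+1}_{i+1}(x)$ and the degree-elevation rule $B^n_i(x)=\tfrac{i+1}{n+1}B^{n+1}_{i+1}(x)+\tfrac{n+1-i}{n+1}B^{n+1}_i(x)$. Writing $(s-t)B^n_i(s)B^n_j(t)=\bigl(sB^n_i(s)\bigr)B^n_j(t)-B^n_i(s)\bigl(tB^n_j(t)\bigr)$ and applying these rules, the common product $B^{n+1}_{i+1}(s)B^{n+1}_{j+1}(t)$ cancels between the two terms, leaving
\[
(s-t)B^n_i(s)B^n_j(t)=\frac{(i+1)(n+1-j)}{(n+1)^2}B^{n+1}_{i+1}(s)B^{n+1}_j(t)-\frac{(n+1-i)(j+1)}{(n+1)^2}B^{n+1}_i(s)B^{n+1}_{j+1}(t).
\]

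Finally, I would substitute this expansion into the sum, collect the coefficient of $B^{n+1}_k(s)B^{n+1}_l(t)$ (the first group of terms contributes through $k=i+1$, $l=j$, the second through $k=i$, $l=j+1$), equate it with the left-hand coefficient, and specialize to $k=i+1$, $l=j$. Solving the resulting scalar equation for $b_{ij}$ produces \eqref{eq:recursion}. The main obstacle is not any single identity but the index bookkeeping: one must track the shifted ranges carefully, treat out-of-range entries such as $b_{i+1,j-1}$ as zero at the boundary, and confirm that the linear-independence argument legitimately permits coefficient matching across the full $(n+2)\times(n+2)$ grid of products, even though the B\'{e}zout matrix itself is only $(n+1)\times(n+1)$.
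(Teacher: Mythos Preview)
Your proposal is correct and follows essentially the same route as the paper: both arguments multiply the defining relation by $(s-t)$, use the identities $xB^n_i(x)=\tfrac{i+1}{n+1}B^{n+1}_{i+1}(x)$ and $B^n_i(x)=\tfrac{i+1}{n+1}B^{n+1}_{i+1}(x)+\tfrac{n+1-i}{n+1}B^{n+1}_i(x)$ to rewrite $(s-t)\sum b_{ij}B^n_i(s)B^n_j(t)$ in the tensor basis $\{B^{n+1}_k(s)B^{n+1}_l(t)\}$, and then compare coefficients with the direct expansion of $v(s)w(t)-v(t)w(s)$. The only cosmetic difference is that the paper expands $s\sum(\cdots)$ and $t\sum(\cdots)$ separately before subtracting, whereas you expand the single product $(s-t)B^n_i(s)B^n_j(t)$ first; the resulting coefficient identity and the index shift to $(k,l)=(i+1,j)$ are identical.
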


\begin{proof}
By (\ref{eq:bernstein}), we have that
\begin{equation}
tB^n_j(t) = \frac{j+1}{n+1}B^{n+1}_{j+1}(t)
\end{equation}
and
\begin{equation}
B^n_j(t) = \frac{n-j+1}{n+1}B^{n+1}_j(t)+\frac{j+1}{n+1}B^{n+1}_{j+1}(t).
\end{equation}
Therefore,
\begin{align*}
s\sum_{i,j=0}^n b_{ij}B^n_i(s)B^n_j(t) &= \sum_{i,j=0}^n \frac{i+1}{n+1}  b_{ij}B^{n+1}_{i+1}(s)B^n_j(t) \\
&= \sum_{i,j=0}^n \frac{(i+1)(n-j+1)}{(n+1)^2}b_{ij} B^{n+1}_{i+1}(s)B^{n+1}_j(t) \\
&\qquad +\sum_{i,j=0}^n \frac{(i+1)(j+1)}{(n+1)^2}b_{ij} B^{n+1}_{i+1}(s)B^{n+1}_{j+1}(t).
\end{align*}
Similarly, we have that
\begin{align*}
t\sum_{i,j=0}^n b_{ij}B^n_i(s)B^n_j(t) &= \sum_{i,j=0}^n \frac{j+1}{n+1}b_{ij}B^n_i(s)B^{n+1}_{j+1}(t) \\
&= \sum_{i,j=0}^n \frac{(n-i+1)(j+1)}{(n+1)^2} b_{ij} B^{n+1}_i(s)B^{n+1}_{j+1}(t) \\
&\qquad + \sum_{i,j=0}^n \frac{(i+1)(j+1)}{(n+1)^2} b_{ij} B^{n+1}_{i+1}(s)B^{n+1}_{j+1}(t).
\end{align*}
This implies that
\begin{align*}
(s-t)\sum_{i,j=0}^n b_{ij} B^n_i(s)B^n_j(t) &= \sum_{i,j=0}^n \frac{(i+1)(n-j+1)}{(n+1)^2}b_{ij} B^{n+1}_{i+1}(s)B^{n+1}_j(t) \\
&\quad -\sum_{i,j=0}^n \frac{(n-i+1)(j+1)}{(n+1)^2}b_{ij}B^{n+1}_i(s)B^{n+1}_{j+1}(t),
\end{align*}
and so
\begin{align*}
(s-t)\sum_{i,j=0}^n b_{ij}B^n_i(s)B^n_j(t) &= \sum_{i,j=0}^{n+1}\frac{i(n-j+1)}{(n+1)^2}b_{i-1,j}B^{n+1}_i(s)B^{n+1}_j(t) \\
&\quad -\sum_{i,j=0}^{n+1}\frac{j(n-i+1)}{(n+1)^2}b_{i,j-1}B^{n+1}_i(s)B^{n+1}_j(t).
\end{align*}
On the other hand,
\begin{equation}
v(s)w(t)-v(t)w(s) = \sum_{i,j=0}^{n+1} (\bfv_i\bfw_j-\bfv_j\bfw_i)B^{n+1}_i(s)B^{n+1}_j(t),
\end{equation}
and so the result follows by comparing coefficients in (\ref{eq:bezout}).
\end{proof}

We can form the first column and last row of $\Bez(v,w)$ by using (\ref{eq:recursion}) with $j=0$ and $i=n$. The rest of the columns can then be built by applying the recurrence relation. Therefore,

\begin{corollary}
\label{cor:bezoutcost}
$\Bez(v,w)$ can be constructed in $\mathcal{O}(n^2)$ operations.
\end{corollary}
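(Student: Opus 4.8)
The plan is to show that the recurrence~(\ref{eq:recursion}) lets us fill in the matrix $\Bez(v,w)$ one entry at a time, with only $\mathcal{O}(1)$ arithmetic per entry, starting from two boundary strips that are themselves cheap to compute. Since $\Bez(v,w)$ has $(n+1)^2$ entries, this immediately yields the $\mathcal{O}(n^2)$ bound.

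First I would establish the base cases. Setting $j=0$ in~(\ref{eq:recursion}) kills the factor $j(n-i)$ multiplying $b_{i+1,j-1}$, so the first column collapses to the closed form $b_{i0} = \frac{n+1}{i+1}\left(\bfv_{i+1}\bfw_0 - \bfv_0\bfw_{i+1}\right)$, which depends only on the given Bernstein coefficients and costs $\mathcal{O}(1)$ per entry. Symmetrically, setting $i=n$ kills the factor $(n-i)$, giving the closed form $b_{nj} = \frac{n+1}{n-j+1}\left(\bfv_{n+1}\bfw_j - \bfv_j\bfw_{n+1}\right)$ for the last row. Thus the first column and the last row can each be assembled in $\mathcal{O}(n)$ operations.

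Next I would exploit the direction of the recurrence. Since $b_{ij}$ is expressed through $b_{i+1,j-1}$, and $(i+1)+(j-1)=i+j$, each dependency stays on the anti-diagonal along which $i+j$ is constant, while increasing the row index and decreasing the column index. Therefore, for any entry with $i<n$ and $j>0$, repeatedly following the recurrence strictly increases $i$ and decreases $j$ until the index reaches either $j=0$ (an entry of the first column) or $i=n$ (an entry of the last row), both of which are already known. Consequently every remaining entry can be obtained by starting at its anti-diagonal's boundary entry and propagating up and to the right, invoking~(\ref{eq:recursion}) once per entry at a cost of $\mathcal{O}(1)$.

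The main point to verify carefully is exactly this reachability claim---that the dependency chain terminates at one of the precomputed strips rather than running off the matrix. This is not difficult, but it is the step that justifies the whole scheme. With reachability in hand, summing the $\mathcal{O}(1)$ cost over all $(n+1)^2$ entries, together with the $\mathcal{O}(n)$ cost of the two boundary strips, gives the claimed $\mathcal{O}(n^2)$ total.
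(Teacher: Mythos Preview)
Your proposal is correct and follows essentially the same approach as the paper: the paper's argument is the brief paragraph preceding the corollary, which says to use~(\ref{eq:recursion}) with $j=0$ and $i=n$ to obtain the first column and last row, and then to build the remaining columns by the recurrence. You have simply made explicit the closed forms for those boundary strips and the anti-diagonal dependency structure that guarantees each remaining entry is reached in $\mathcal{O}(1)$ work from already-computed data.
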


By repeatedly applying (\ref{eq:recursion}), we obtain a closed form for the entries of the Bernstein--B\'{e}zout matrix.

\begin{corollary}
\label{cor:bernbezout}
If $v(t)=\sum_{i=0}^{n+1} \bfv_iB^{n+1}_i(t)$ and $w(t)=\sum_{i=0}^{n+1} \bfw_iB^{n+1}_i(t)$ are polynomials of degree at most $n+1$, then the entries $b_{ij}$ of the Bernstein--B\'{e}zout matrix $\Bez(v,w)$ generated by $v$ and $w$ are given by
\begin{equation}
\label{eq:bernbezout}
b_{ij} = \frac{1}{\binom{n}{i}\binom{n}{j}}\sum_{k=0}^{m_{ij}} \binom{n+1}{i+k+1}\binom{n+1}{j-k}\left( \bfv_{i+k+1}\bfw_{j-k}-\bfv_{j-k}\bfw_{i+k+1}\right),
\end{equation}
where $m_{ij} = \min\{j,n-i\}$.
\end{corollary}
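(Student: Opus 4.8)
The plan is to prove the closed form by induction on the quantity $m_{ij}=\min\{j,n-i\}$, exploiting the fact that the recurrence (\ref{eq:recursion}) expresses $b_{ij}$ in terms of $b_{i+1,j-1}$, i.e. it sends the index pair $(i,j)$ to $(i+1,j-1)$. This shift decreases $m_{ij}$ by exactly one while holding $i+j$ fixed, so iterating (\ref{eq:recursion}) terminates after precisely $m_{ij}$ steps; this is the structural reason $m_{ij}$ appears as the upper summation limit in (\ref{eq:bernbezout}).

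For the base case $m_{ij}=0$, either $j=0$ or $i=n$. In either situation the coefficient $j(n-i)$ multiplying $b_{i+1,j-1}$ in (\ref{eq:recursion}) vanishes, leaving only the term involving $\bfv_{i+1}\bfw_j-\bfv_j\bfw_{i+1}$. I would then check directly that this surviving term coincides with the single $k=0$ summand of (\ref{eq:bernbezout}), using the elementary ratios $\binom{n+1}{i+1}/\binom{n}{i}=(n+1)/(i+1)$ and $\binom{n+1}{j}/\binom{n}{j}=(n+1)/(n-j+1)$.

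For the inductive step I assume $m_{ij}\geq 1$ (so both $j\geq 1$ and $i\leq n-1$), note that $m_{i+1,j-1}=m_{ij}-1$, and substitute the induction hypothesis for $b_{i+1,j-1}$ into (\ref{eq:recursion}). After reindexing the inherited sum by $k\mapsto k+1$, its generic summand becomes $\binom{n+1}{i+k+1}\binom{n+1}{j-k}(\bfv_{i+k+1}\bfw_{j-k}-\bfv_{j-k}\bfw_{i+k+1})$ ranging over $1\leq k\leq m_{ij}$, which is exactly the $k\geq 1$ portion of the target sum, while the explicit term in (\ref{eq:recursion}) supplies the missing $k=0$ summand.

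The only genuine computation is verifying that the scalar prefactors collapse correctly. Concretely, I would confirm
\begin{equation*}
\frac{j(n-i)}{(i+1)(n-j+1)}\cdot\frac{1}{\binom{n}{i+1}\binom{n}{j-1}} = \frac{1}{\binom{n}{i}\binom{n}{j}}
\quad\text{and}\quad
\frac{(n+1)^2}{(i+1)(n-j+1)} = \frac{1}{\binom{n}{i}\binom{n}{j}}\binom{n+1}{i+1}\binom{n+1}{j},
\end{equation*}
both of which reduce to the same two binomial ratios used in the base case via $\binom{n}{i+1}=\binom{n}{i}(n-i)/(i+1)$ and $\binom{n}{j-1}=\binom{n}{j}\,j/(n-j+1)$. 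Because the recurrence is linear and the quantity $\bfv_p\bfw_q-\bfv_q\bfw_p$ is antisymmetric, no sign or cross-term issues arise, and the two fragments assemble into (\ref{eq:bernbezout}). The main obstacle is purely the careful bookkeeping of the shifted summation range and the matching of the binomial prefactors; there is no conceptual difficulty beyond correctly aligning the limits of $k$.
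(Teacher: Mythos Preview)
Your proposal is correct and is precisely the approach the paper intends: the paper simply states that the closed form follows ``by repeatedly applying (\ref{eq:recursion})'' without further detail, and your induction on $m_{ij}$ is exactly the rigorous formalization of that iteration, with the binomial identities you list being the routine verifications needed to make the prefactors match.
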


Constructing $\Bez(v,w)$ by using (\ref{eq:bernbezout}) would require $\mathcal{O}(n^3)$ operations, and so Corollary~\ref{cor:bezoutcost} implies that (\ref{eq:bernbezout}) is not optimal for building $\Bez(v,w)$; however, since $\binom{n}{i}=0$ whenever $i>n$, the upper limit of the sum in (\ref{eq:bernbezout}) can be replaced with $k=n$, and so we recognize that $\Bez(v,w)$ is given by a difference of matrix products.

\begin{corollary}
\label{cor:factorization}
Let $v(t) = \sum_{i=0}^{n+1} \bfv_iB^{n+1}_i(t)$ and $w(t)=\sum_{i=0}^{n+1}\bfw_iB^{n+1}_i(t)$ be polynomials of degree at most $n+1$. Define the Toeplitz matrices $T^{v,n}$ and $T^{w,n}$ by
\begin{equation*}
T^{v,n}_{ij} = \binom{n+1}{j-i}\bfv_{j-i}\qquad \text{and}\qquad T^{w,n}_{ij} = \binom{n+1}{j-i}\bfw_{j-i},
\end{equation*}
define the Hankel matrices $H^{v,n}$ and $H^{w,n}$ by
\begin{equation*}
H^{v,n}_{ij} = \binom{n+1}{i+j+1}\bfv_{i+j+1}\qquad \text{and}\qquad H^{w,n}_{ij} = \binom{n+1}{i+j+1}\bfw_{i+j+1},
\end{equation*}
and let $\Delta^n=\diag\left(\binom{n}{j}\right)_{j=0}^n$. Then the Bernstein--B\'{e}zout matrix $\Bez(v,w)$ generated by $v$ and $w$ is given by
\begin{equation}
\label{eq:bezoutinverse}
\Bez(v,w) = \left(\Delta^n\right)^{-1}\left[ H^{v,n}T^{w,n}-H^{w,n}T^{v,n}\right]\left(\Delta^n\right)^{-1}.
\end{equation}
\end{corollary}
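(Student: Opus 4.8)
The plan is to start directly from the closed-form expression for the Bernstein--B\'{e}zout entries in Corollary~\ref{cor:bernbezout} and recognize the summation there as an entrywise matrix product. Since the prefactor $1/\!\left(\binom{n}{i}\binom{n}{j}\right)$ will be absorbed by conjugation with $(\Delta^n)^{-1}$, it suffices to show that the bracketed matrix $H^{v,n}T^{w,n}-H^{w,n}T^{v,n}$ has $(i,j)$ entry equal to $\sum_{k=0}^{m_{ij}}\binom{n+1}{i+k+1}\binom{n+1}{j-k}\left(\bfv_{i+k+1}\bfw_{j-k}-\bfv_{j-k}\bfw_{i+k+1}\right)$.

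First I would compute the entries of the matrix product $H^{v,n}T^{w,n}$ by summing over the shared index. Using the definitions, $\left(H^{v,n}T^{w,n}\right)_{ij}=\sum_{k} H^{v,n}_{ik}T^{w,n}_{kj}=\sum_{k}\binom{n+1}{i+k+1}\bfv_{i+k+1}\binom{n+1}{j-k}\bfw_{j-k}$, where $k$ ranges over the full index set $\{0,\dots,n\}$. The analogous computation for $H^{w,n}T^{v,n}$ simply swaps the roles of $v$ and $w$, and subtracting the two products produces exactly the antisymmetric combination $\bfv_{i+k+1}\bfw_{j-k}-\bfv_{j-k}\bfw_{i+k+1}$ that appears in~(\ref{eq:bernbezout}).

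The key step, and the only place requiring genuine care, is reconciling the summation ranges: in the matrix product $k$ runs over all of $\{0,\dots,n\}$, whereas in Corollary~\ref{cor:bernbezout} it is truncated at $m_{ij}=\min\{j,n-i\}$. I would argue that the two agree because the binomial factors annihilate the surplus terms. Adopting the convention $\binom{n+1}{\ell}=0$ for $\ell<0$ or $\ell>n+1$, we have $\binom{n+1}{i+k+1}=0$ whenever $k>n-i$ and $\binom{n+1}{j-k}=0$ whenever $k>j$, so every term with $k>m_{ij}$ vanishes and extending the upper limit to $n$ is harmless. This is precisely the observation recorded in the text preceding the corollary. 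The same convention also disposes of any formally out-of-range coefficients $\bfv_\ell,\bfw_\ell$, since each is multiplied by a vanishing binomial coefficient; I would additionally note that for $0\le k\le m_{ij}$ the indices satisfy $1\le i+k+1\le n+1$ and $0\le j-k\le n$, so all retained terms are well defined.

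Finally I would reinstate the diagonal prefactor. Because $(\Delta^n)^{-1}=\diag\left(1/\binom{n}{j}\right)_{j=0}^n$, left-multiplication scales row $i$ by $1/\binom{n}{i}$ and right-multiplication scales column $j$ by $1/\binom{n}{j}$, reproducing the factor $1/\!\left(\binom{n}{i}\binom{n}{j}\right)$ in~(\ref{eq:bernbezout}). Matching entries then yields~(\ref{eq:bezoutinverse}). I do not anticipate any genuine difficulty beyond this bookkeeping; the main thing to verify is that the Hankel and Toeplitz blocks are correctly of size $(n+1)\times(n+1)$ and that their out-of-support entries are genuinely zero, which again follows from the binomial-coefficient convention.
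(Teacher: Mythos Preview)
Your proposal is correct and follows exactly the route the paper takes: the paper derives Corollary~\ref{cor:factorization} directly from the closed form in Corollary~\ref{cor:bernbezout} by noting that the binomial convention $\binom{n}{i}=0$ for $i>n$ lets the upper limit in~(\ref{eq:bernbezout}) be replaced by $k=n$, at which point the sum is recognized as the $(i,j)$ entry of $H^{v,n}T^{w,n}-H^{w,n}T^{v,n}$, and the prefactor $1/\bigl(\binom{n}{i}\binom{n}{j}\bigr)$ is absorbed by the two copies of $(\Delta^n)^{-1}$. Your write-up spells out precisely this bookkeeping.
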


\begin{remark}
\label{rmk:connection}
In \cite{allenkirby2020mass}, it was shown that the inverse of the degree $n$ Bernstein mass matrix $M^n$ has the decomposition
\begin{equation}
\label{eq:massinverse}
\left(M^n\right)^{-1} = \left(\Delta^n\right)^{-1}\left[ \widetilde{T}^nH^n-T^n\widetilde{H}^n\right] \left(\Delta^n\right)^{-1},
\end{equation}
where $\widetilde{H}^n$ and $H^n$ are Hankel matrices, $\widetilde{T}^n$ and $T^n$ are Toeplitz matrices, and $\Delta^n = \diag\left(\binom{n}{j}\right)_{j=0}^n$. This decomposition was obtained from the monomial B\'{e}zout matrix corresponding to polynomials whose coefficients are contained in the last column of $\left(M^n\right)^{-1}$. It is interesting to note the similarities between (\ref{eq:bezoutinverse}) and (\ref{eq:massinverse}) despite the different matrices and bases being considered.
\end{remark}

Given a polynomial $v(t)=\sum_{k=0}^{n+1} \bfv_k B^{n+1}_k(t)$ of degree at most $n+1$, we can express $v$ in the monomial basis as $v(t)=\sum_{k=0}^{n+1} \widetilde{\bfv}_kt^k$. It was shown in \cite{farouki2000legendre} that
\begin{equation}
\label{eq:montobern}
\bfv_k = \frac{1}{\binom{n+1}{k}} \sum_{\ell=0}^k \binom{n-\ell+1}{k-\ell} \widetilde{\bfv}_{\ell}.
\end{equation}
This implies that
\begin{align*}
H^{v,n}_{ij} = \binom{n+1}{i+j+1}\bfv_{i+j+1} &= \sum_{k=0}^{i+j+1} \binom{n-k+1}{i+j-k+1}\widetilde{\bfv}_k \\
&= \sum_{k=0}^{i+j+1} \binom{n-k+1}{n-i-j}\widetilde{\bfv}_k.
\end{align*}
Similarly,
\begin{equation*}
T^{v,n}_{ij} = \sum_{k=0}^{j-i}\binom{n-k+1}{j-i-k}\widetilde{\bfv}_ k.
\end{equation*}
By Vieta's formula (see, for example, \cite{rotman2015algebra}),
\begin{equation}
\label{eq:vieta}
\prod_{i=0}^n (x-\bfx_i) = \sum_{k=0}^n (-1)^{n-k+1}\sigma_{n-k+1}(\bfx)x^k,
\end{equation}
where
\begin{equation}
\sigma_k(\bfx) = \begin{cases} 1, & \text{if}\ k=0; \\ \sum_{0\leq i_0<\dots<i_{k-1}\leq n} \bfx_{i_0}\cdots \bfx_{i_{k-1}}, & \text{otherwise}; \end{cases}
\end{equation}
is the $k^{\text{th}}$ elementary symmetric function in the $n+1$ variables $\bfx_0,\dots,\bfx_n$. In addition, since
\begin{equation}
1 = \sum_{i=0}^{n+1} B^{n+1}_i(x)
\end{equation}
and
\begin{equation}
\frac{d}{dx}\left[ \prod_{i=0}^n (x-\bfx_i)\right]_{x=x_j} = \prod_{i\in\{0,\dots,n\}\setminus\{j\}} (\bfx_j-\bfx_i),
\end{equation}
we can combine the previous discussion with Corollary~\ref{cor:bezout} and Corollary~\ref{cor:factorization} to obtain a decomposition of the inverse of the Bernstein--Vandermonde matrix.

\begin{theorem}
\label{thm:factoredinverse}
Define the Hankel matrices $H^n$ and $\widetilde{H}^n(\bfx)$ by
\begin{equation*}
H^n_{ij} = \binom{n+1}{i+j+1}\qquad \text{and}\qquad \widetilde{H}^n_{ij}(\bfx) = \sum_{k=0}^{i+j+1} (-1)^{n-k+1}\binom{n-k+1}{n-i-j}\sigma_{n-k+1}(\bfx),
\end{equation*}
define the Toeplitz matrices $T^n$ and $\widetilde{T}^n(\bfx)$ by
\begin{equation*}
T^n_{ij} = \binom{n+1}{j-i}\qquad \text{and}\qquad \widetilde{T}^n_{ij}(\bfx) = \sum_{k=0}^{j-i} (-1)^{n-k+1} \binom{n-k+1}{j-i-k}\sigma_{n-k+1}(\bfx),
\end{equation*}
and define the diagonal matrices $D^n(\bfx)$ and $\Delta^n$ by
\begin{equation*}
D^n(\bfx) = \diag\left( \prod_{i\in\{0,\dots,n\}\setminus\{j\}}(\bfx_j-\bfx_i)\right)_{j=0}^n\qquad \text{and}\qquad \Delta^n = \diag\left( \binom{n}{j}\right)_{j=0}^n.
\end{equation*}
In addition, let $\widetilde{V}^n(\bfx)$ be the scaled Bernstein--Vandermonde matrix
\begin{equation*}
\widetilde{V}^n_{ij}(\bfx) = \bfx_i^j(1-\bfx_i)^{n-j}.
\end{equation*}
Then
\begin{equation}
\left(V^n(\bfx)\right)^{-1} = \left(\Delta^n\right)^{-1}\left[ \widetilde{H}^n(\bfx)T^n-H^n\widetilde{T}^n(\bfx)\right]\left(\widetilde{V}^n(\bfx)\right)^T\left(D^n(\bfx)\right)^{-1}.
\end{equation}
\end{theorem}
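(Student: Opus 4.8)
The plan is to specialize Corollary~\ref{cor:bezout} and Corollary~\ref{cor:factorization} to a carefully chosen pair of polynomials and then track the diagonal scalings. First I would set $v(t) = \prod_{i=0}^n(t-\bfx_i)$, the monic nodal polynomial, which has degree $n+1$ and simple zeros precisely at the $\bfx_i$, so it is admissible in Corollary~\ref{cor:bezout}. For the second argument I would take $w\equiv 1$. The partition-of-unity identity $1 = \sum_{i=0}^{n+1} B^{n+1}_i(t)$ shows that the Bernstein coefficients of $w$ are $\bfw_i = 1$ for every $i$, so the Toeplitz and Hankel matrices built from $w$ in Corollary~\ref{cor:factorization} collapse to $T^{w,n} = T^n$ and $H^{w,n} = H^n$.

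Next I would identify the matrices built from $v$. Expanding $v$ in the monomial basis via Vieta's formula~(\ref{eq:vieta}) gives monomial coefficients $\widetilde{\bfv}_k = (-1)^{n-k+1}\sigma_{n-k+1}(\bfx)$. Feeding these through the monomial-to-Bernstein conversion~(\ref{eq:montobern})---exactly the computation carried out in the paragraph preceding the theorem---yields $H^{v,n} = \widetilde{H}^n(\bfx)$ and $T^{v,n} = \widetilde{T}^n(\bfx)$. Substituting all four matrices into~(\ref{eq:bezoutinverse}) then gives
\[
\Bez(v,1) = (\Delta^n)^{-1}\bigl[\widetilde{H}^n(\bfx)T^n - H^n\widetilde{T}^n(\bfx)\bigr](\Delta^n)^{-1}.
\]

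The remaining ingredient is the diagonal factor. Since $v$ is monic with simple roots, $v'(\bfx_j) = \prod_{i\in\{0,\dots,n\}\setminus\{j\}}(\bfx_j-\bfx_i)$, so $\diag(1/v'(\bfx_j))_{j=0}^n = (D^n(\bfx))^{-1}$. I would also record the scaling $V^n_{ij}(\bfx) = \binom{n}{j}\widetilde{V}^n_{ij}(\bfx)$, that is $V^n(\bfx) = \widetilde{V}^n(\bfx)\Delta^n$, whence $(V^n(\bfx))^T = \Delta^n(\widetilde{V}^n(\bfx))^T$ because $\Delta^n$ is diagonal. Plugging the expressions for $\Bez(v,1)$, $(V^n(\bfx))^T$, and $(D^n(\bfx))^{-1}$ into Corollary~\ref{cor:bezout} produces an interior factor $(\Delta^n)^{-1}\Delta^n = I$, which cancels and leaves exactly the claimed formula.

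I expect no single step to be a genuine obstacle, since the heavy lifting---the recurrence, the closed form, and the Hankel/Toeplitz factorization---is already done in Corollaries~\ref{cor:bezoutcost}--\ref{cor:factorization}. The one place demanding care is the bookkeeping: making sure the index ranges in the conversion~(\ref{eq:montobern}) and in Vieta~(\ref{eq:vieta}) line up so that $H^{v,n}$ and $T^{v,n}$ genuinely match the stated entries of $\widetilde{H}^n(\bfx)$ and $\widetilde{T}^n(\bfx)$, and correctly tracking which diagonal factors survive the cancellation so that the remaining $(\Delta^n)^{-1}$ on the left and $(D^n(\bfx))^{-1}$ on the right land in the right places.
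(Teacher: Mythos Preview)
Your proposal is correct and follows essentially the same route as the paper: the discussion preceding the theorem assembles precisely the ingredients you list---Corollary~\ref{cor:bezout} with $v=\prod_i(t-\bfx_i)$ and $w=1$, the partition of unity for the Bernstein coefficients of $1$, Vieta plus~(\ref{eq:montobern}) for the $\widetilde{H}^n,\widetilde{T}^n$ entries, and the derivative identity for $D^n(\bfx)$---and the theorem is then stated as the result of combining them. Your additional observation that $(V^n(\bfx))^T=\Delta^n(\widetilde{V}^n(\bfx))^T$ forces the cancellation $(\Delta^n)^{-1}\Delta^n=I$ makes explicit the one bookkeeping step the paper leaves implicit.
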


\subsection{Equispaced Nodes}
\label{ssec:equispaced}

In Section~\ref{sec:dimensions}, we will derive a block $LU$ decomposition for the Bernstein--Vandermonde matrix associated to equispaced nodes on the $d$-simplex. This decomposition leads to a recursive, block-structured algorithm for the corresponding interpolation problem. The base for this algorithm is the one-dimensional Bernstein--Vandermonde matrix associated to equispaced nodes, and so we now briefly focus on the case where $\bfx_i=i/n$ for each $0\leq i\leq n$. For these nodes, we observe that (\ref{eq:vieta}) becomes
\begin{align*}
\prod_{i=0}^n (x-i/n) = \frac{1}{n^{n+1}}\prod_{i=0}^{n+1} (nx-i) &= \frac{1}{n^{n+1}}\sum_{k=0}^{n+1} s(n+1,k)(nx)^k \\
&= \sum_{k=0}^{n+1} \frac{s(n+1,k)}{n^{n-k+1}}x^k,
\end{align*}
where $s(n,k)$ are the (signed) Stirling numbers of the first kind \cite{stegun1965handbook}. In addition, we have that
\begin{equation}
\prod_{i\in\{0,\dots,n\}\setminus\{j\}}(j/n-i/n) = (-1)^{n-j} \frac{j!(n-j)!}{n^n}.
\end{equation}

\begin{corollary}
\label{cor:equispacedinverse}
Define the Hankel matrices $H^n$ and $\widetilde{H}^n$ by
\begin{equation*}
H^n_{ij} = \binom{n+1}{i+j+1}\qquad \text{and}\qquad \widetilde{H}^n_{ij} = \sum_{k=0}^{i+j+1}\binom{n-k+1}{n-i-j}\frac{s(n+1,k)}{n^{n-k+1}},
\end{equation*}
define the Toeplitz matrices $T^n$ and $\widetilde{T}^n$ by
\begin{equation*}
T^n_{ij} = \binom{n+1}{j-i}\qquad \text{and}\qquad \widetilde{T}^n_{ij} = \sum_{k=0}^{j-i} \binom{n-k+1}{j-i-k}\frac{s(n+1,k)}{n^{n-k+1}},
\end{equation*}
and define the diagonal matrices $D^n$ and $\Delta^n$ by
\begin{equation*}
D^n = \diag\left( (-1)^{n-j}\left[j!(n-j)!\right]\right)_{j=0}^n\qquad \text{and}\qquad \Delta^n = \diag\left( \binom{n}{j}\right)_{j=0}^n.
\end{equation*}
In addition, let $\widetilde{V}^n$ be the scaled Bernstein--Vandermonde matrix
\begin{equation*}
\widetilde{V}^n_{ij} = i^j(n-i)^{n-j}.
\end{equation*}
Then the inverse of $V^n=V^n(0,1/n,\dots,1)$ is given by
\begin{equation}
\left(V^n\right)^{-1} = \left(\Delta^n\right)^{-1}\left[ \widetilde{H}^nT^n-H^n\widetilde{T}^n\right]\left(\widetilde{V}^n\right)^T\left(D^n\right)^{-1}.
\end{equation}
\end{corollary}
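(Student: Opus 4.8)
The plan is to obtain Corollary~\ref{cor:equispacedinverse} as a direct specialization of Theorem~\ref{thm:factoredinverse} to the nodes $\bfx_i = i/n$, simplifying each node-dependent factor in turn and then verifying that the only discrepancies---certain powers of $n$---cancel. First I would observe that $H^n$, $T^n$, and $\Delta^n$ do not depend on $\bfx$ and so carry over unchanged; the work is therefore concentrated in the four node-dependent factors $\widetilde{H}^n(\bfx)$, $\widetilde{T}^n(\bfx)$, $\widetilde{V}^n(\bfx)$, and $D^n(\bfx)$.

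For $\widetilde{H}^n(\bfx)$ and $\widetilde{T}^n(\bfx)$, the key input is already recorded in the discussion preceding the corollary: applying Vieta's formula (\ref{eq:vieta}) to $v(x)=\prod_{i=0}^n(x - i/n)$ and expanding via Stirling numbers shows that the monomial coefficient of $x^k$, namely $(-1)^{n-k+1}\sigma_{n-k+1}(\bfx)$, equals $s(n+1,k)/n^{n-k+1}$. Substituting this identity into the defining sums for $\widetilde{H}^n_{ij}(\bfx)$ and $\widetilde{T}^n_{ij}(\bfx)$ given in Theorem~\ref{thm:factoredinverse} reproduces verbatim the matrices $\widetilde{H}^n$ and $\widetilde{T}^n$ of the corollary. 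Hence the entire bracketed factor $\widetilde{H}^n(\bfx)T^n - H^n\widetilde{T}^n(\bfx)$ is unchanged by the rescaling.

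It remains to handle the two outer factors, where the powers of $n$ will surface. For the scaled Vandermonde matrix, $\widetilde{V}^n_{ij}(\bfx)=(i/n)^j(1-i/n)^{n-j}$ factors as $n^{-n}\,i^j(n-i)^{n-j}$, so $\widetilde{V}^n(\bfx) = n^{-n}\widetilde{V}^n$ with the corollary's integer-entry $\widetilde{V}^n$. For the diagonal factor, the derivative computation just above the corollary gives $\prod_{i\neq j}(j/n - i/n) = (-1)^{n-j} j!(n-j)!/n^n$, whence $D^n(\bfx) = n^{-n}D^n$ and therefore $\left(D^n(\bfx)\right)^{-1} = n^{n}\left(D^n\right)^{-1}$. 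Assembling these in the product $\left(\widetilde{V}^n(\bfx)\right)^T\left(D^n(\bfx)\right)^{-1}$, the scalar $n^{-n}$ from $\left(\widetilde{V}^n(\bfx)\right)^T$ and the scalar $n^{n}$ from $\left(D^n(\bfx)\right)^{-1}$ commute past the matrices and multiply to $1$, so the formula of Theorem~\ref{thm:factoredinverse} collapses to the stated expression for $\left(V^n\right)^{-1}$.

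I expect no genuine obstacle here: the argument is pure substitution, and the only point requiring care is the bookkeeping of the powers of $n$. Specifically, one must confirm that the corollary's choice to strip the common factor $n^{-n}$ from both $\widetilde{V}^n$ and $D^n$ is consistent---that these factors sit on opposite sides of the inverse, one in $\left(\widetilde{V}^n(\bfx)\right)^T$ and one inverted in $\left(D^n(\bfx)\right)^{-1}$---so that they cancel rather than compound. Once this is checked, the result is immediate.
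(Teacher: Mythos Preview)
Your proposal is correct and matches the paper's approach exactly: the corollary is stated without proof, following immediately from Theorem~\ref{thm:factoredinverse} together with the Stirling-number and derivative identities displayed just before it, and your write-up simply makes explicit the substitution and the cancellation of the $n^{\pm n}$ scalars between $\left(\widetilde{V}^n(\bfx)\right)^T$ and $\left(D^n(\bfx)\right)^{-1}$.
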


\section{Applying the inverse}
\label{sec:apply}

Now, we describe several approaches to applying $\left(V^n(\bfx)\right)^{-1}$ to a vector.

\paragraph{$LU$ factorization} We can decompose $V^n(\bfx)$ as
\begin{equation}
V^n(\bfx) = L^n(\bfx)U^n(\bfx),
\end{equation}
where $L^n(\bfx)$ and $U^n(\bfx)$ are lower and upper triangular matrices, respectively. Widely available in libraries, computing $L^n(\bfx)$ and $U^n(\bfx)$ requires $\mathcal{O}(n^3)$ operations, and each of the subsequent triangular solves require $\mathcal{O}(n^2)$ operations to perform.

\paragraph{Exact inverse} In light of Corollary~\ref{cor:bezout}, we can directly form $\left(V^n(\bfx)\right)^{-1}$. By Theorem~\ref{thm:recursion}, we can form the inverse in $\mathcal{O}(n^2)$ operations. The inverse can then be applied to any vector in $\mathcal{O}(n^2)$ operations using the standard algorithm.

\paragraph{DFT-based application} By Theorem~\ref{thm:factoredinverse}, we can express the inverse of $V^n(\bfx)$ in terms of its (scaled) transpose and Hankel, Toeplitz, and diagonal matrices. The matrices $H^n(\bfx)$ and $T^n(\bfx)$ can be formed in $\mathcal{O}(n)$ operations, but the matrices $\widetilde{H}^n(\bfx)$ and $\widetilde{T}^n(\bfx)$ require $\mathcal{O}(n^2)$ operations to form, since each entry involves a sum. Even though the Hankel and Toeplitz matrices can be applied to a vector in $\mathcal{O}(n\log n)$ operations via circulant embedding and a couple of FFT/iFFT \cite{vogel2002fft}, the scaled transpose still requires $\mathcal{O}(n^2)$ operations to apply to a vector.

\paragraph{Newton algorithm} In \cite{ainsworth2016newton}, Ainsworth and Sanchez gave a fast algorithm for solving $V^n(\bfx)\bfc=\bfb$ via a recursion relation for the B\'{e}zier control points of the Newton form of the Lagrangian interpolant. The numerical results given suggest that the algorithm is stable even for high polynomial degree, and so we include it for comparison.

\section{Conditioning and accuracy}
\label{sec:cond}

In \cite{allenkirby2020mass}, we gave a partial explanation of the relatively high accuracy, compared to its large 2-norm condition number, of working with the Bernstein mass matrix. We will use similar reasoning to investigate the performance of the Bernstein--Vandermonde matrix.

Recall that for an integer $n\geq 0$, the Bernstein mass matrix $M^n$ is the $(n+1)\times(n+1)$ matrix given by
\begin{equation}
\label{eq:mass}
M^n_{ij} = \int_0^1 B^n_i(x)B^n_j(x)dx.
\end{equation}
The mass matrix (for Bernstein or any other family of polynomials) plays an important role in connecting the $L^2$ topology on the finite-dimensional space to linear algebra.  To see this, we first define mappings connecting polynomials of degree $n$ to $\mathbb{R}^{n+1}$.  Given any $\bfc \in \mathbb{R}^{n+1}$, we let $\pi(\bfc)$ be the polynomial expressed in the Bernstein basis with coefficients contained in $\bfc$:
\begin{equation}
\pi(\bfc)(x) = \sum_{i=0}^n \bfc_i B^n_i(x).
\end{equation}
We let $\bfpi$ be the inverse of this mapping, sending any polynomial of degree at most $n$ to the vector of $n+1$ coefficients with respect to the Bernstein basis.

Now, let $p(x)$ and $q(x)$ be polynomials of degree $n$ with expansion coefficients $\bfpi(p) = \bfp$ and $\bfpi(q) = \bfq$.  Then the $L^2$ inner product of $p$ and $q$ is given by the $M^n$-weighted inner product of $\bfp$ and $\bfq$, for 
\begin{equation}
\label{eq:innprod}
\int_0^1 p(x) q(x) dx
=
\sum_{i,j=0}^n \bfp_i \bfq_j \int_0^1 B^n_i(x) B^n_j(x) dx
= \bfp^T M^n \bfq.
\end{equation}
Similarly, if
\begin{equation}
\| \bfp \|_{M^n} = \sqrt{ \bfp^T M^n \bfp }
\end{equation}
is the $M^n$-weighted vector norm, then we have for $p = \pi(\bfp)$,
\begin{equation}
\label{eq:topology}
\| p \|_{L^2} = \| \bfp \|_{M^n}.
\end{equation}

We can interpret the Bernstein--Vandermonde matrix $V^n(\bfx)$ as an operator mapping polynomials of degree at most $n$ to $\mathbb{R}^{n+1}$ via $p\mapsto V^n(\bfx)\bfpi(p)$. Therefore, it makes sense to measure $p$ in the $L^2$ norm and $V^n(\bfx)\bfpi(p)$ in the 2-norm. By (\ref{eq:topology}), we can also consider $\bfpi(p)$ in the $M^n$-norm, which leads us to define the operator norm
\begin{equation}
\label{eq:Mto2}
\| A \|_{M^n\rightarrow 2} = \max_{\bfy\neq 0} \frac{ \| A\bfy \|_2 }{\| \bfy \|_{M^n}},
\end{equation}
and going in the opposite direction,
\begin{equation}
\label{eq:2toM}
\| A \|_{2\rightarrow M^n} = \max_{\bfy\neq 0} \frac{ \| A\bfy \|_{M^n}}{\| \bfy \|_2}.
\end{equation}
These two norms naturally combine to define a new condition number
\begin{equation}
\label{eq:kappaMto2}
\kappa_{M^n\rightarrow 2}(A) = \| A \|_{M^n\rightarrow 2} \left\| A^{-1} \right\|_{2\rightarrow M^n}.
\end{equation}

Since $M^n$ is symmetric and positive definite, it has a well-defined positive square root via the spectral decomposition.

\begin{lemma}
\label{lem:kapparelation}
\begin{equation}
\kappa_{M^n\rightarrow 2}\left(V^n(\bfx)\right) = \kappa_2 \left( V^n(\bfx)\left(M^n\right)^{-1/2}\right).
\end{equation}
\end{lemma}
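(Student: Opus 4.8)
The plan is to collapse both of the mixed operator norms appearing in $\kappa_{M^n\to 2}$ into ordinary spectral norms, after which the claimed identity becomes nothing more than the definition of $\kappa_2$. The single observation driving everything is that, since $M^n$ is symmetric positive definite, its positive square root $(M^n)^{1/2}$ is invertible and satisfies
\begin{equation*}
\|\bfy\|_{M^n} = \sqrt{\bfy^T M^n \bfy} = \left\| (M^n)^{1/2}\bfy \right\|_2
\end{equation*}
for every $\bfy\in\mathbb{R}^{n+1}$.

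First I would rewrite $\|V^n(\bfx)\|_{M^n\to 2}$ via the substitution $\bfz = (M^n)^{1/2}\bfy$, which ranges over all of $\mathbb{R}^{n+1}\setminus\{0\}$ as $\bfy$ does because $(M^n)^{1/2}$ is a bijection. This gives
\begin{equation*}
\|V^n(\bfx)\|_{M^n\to 2} = \max_{\bfy\neq 0} \frac{\|V^n(\bfx)\bfy\|_2}{\|(M^n)^{1/2}\bfy\|_2} = \max_{\bfz\neq 0} \frac{\|V^n(\bfx)(M^n)^{-1/2}\bfz\|_2}{\|\bfz\|_2} = \left\| V^n(\bfx)(M^n)^{-1/2}\right\|_2.
\end{equation*}

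Second, I would treat $\|(V^n(\bfx))^{-1}\|_{2\to M^n}$ directly, applying the identity for $\|\cdot\|_{M^n}$ in the numerator:
\begin{equation*}
\left\|(V^n(\bfx))^{-1}\right\|_{2\to M^n} = \max_{\bfy\neq 0}\frac{\|(M^n)^{1/2}(V^n(\bfx))^{-1}\bfy\|_2}{\|\bfy\|_2} = \left\|(M^n)^{1/2}(V^n(\bfx))^{-1}\right\|_2.
\end{equation*}
Recognizing that $(M^n)^{1/2}(V^n(\bfx))^{-1} = \left(V^n(\bfx)(M^n)^{-1/2}\right)^{-1}$ and multiplying the two factors then yields
\begin{equation*}
\kappa_{M^n\to 2}(V^n(\bfx)) = \left\|V^n(\bfx)(M^n)^{-1/2}\right\|_2 \left\|\left(V^n(\bfx)(M^n)^{-1/2}\right)^{-1}\right\|_2 = \kappa_2\left(V^n(\bfx)(M^n)^{-1/2}\right).
\end{equation*}

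The argument is essentially self-contained, so there is no substantial obstacle; the only point requiring genuine care is verifying that the change of variables $\bfz = (M^n)^{1/2}\bfy$ is a bijection of $\mathbb{R}^{n+1}\setminus\{0\}$, which is precisely where the positive definiteness of $M^n$ (hence the existence and invertibility of $(M^n)^{1/2}$) is used. Everything else follows from unwinding the definitions of the two mixed norms and from the fact that $V^n(\bfx)$ is invertible, so that $\left(V^n(\bfx)(M^n)^{-1/2}\right)^{-1}$ exists and the product of the two spectral norms is exactly $\kappa_2$ of the composite operator.
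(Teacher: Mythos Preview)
Your proof is correct and follows essentially the same approach as the paper: both arguments reduce the mixed norms to spectral norms via the identity $\|\bfy\|_{M^n}=\|(M^n)^{1/2}\bfy\|_2$ and the substitution $\bfz=(M^n)^{1/2}\bfy$, then recognize the product as $\kappa_2\!\left(V^n(\bfx)(M^n)^{-1/2}\right)$. The only cosmetic difference is that the paper treats the inverse norm first and expands the quadratic forms explicitly, whereas you state the square-root identity once at the outset and apply it to both norms in turn.
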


\begin{proof}
Observe that if $\bfy\neq 0$, then
\begin{align*}
\frac{\left\| \left(V^n(\bfx)\right)^{-1} \bfy \right\|^2_{M^n}}{ \| \bfy \|^2_2} &= \frac{ \bfy^T\left(V^n(\bfx)\right)^{-T} M^n \left(V^n(\bfx)\right)^{-1}\bfy}{\| \bfy \|^2_2} \\
&= \frac{ \left( \left(M^n\right)^{1/2} \left(V^n(\bfx)\right)^{-1} \bfy\right)^T\left( \left(M^n\right)^{1/2} \left(V^n(\bfx)\right)^{-1} \bfy\right)}{\| \bfy \|^2_2} \\
&= \frac{ \left\| \left(M^n\right)^{1/2}\left( V^n(\bfx)\right)^{-1} \bfy\right\|^2_2}{ \| \bfy \|^2_2 },
\end{align*}
and so
\begin{equation}
\left\| \left(V^n(\bfx)\right)^{-1}\right\|_{2\rightarrow M^n} = \left\| \left(M^n\right)^{1/2}\left(V^n(\bfx)\right)^{-1} \right\|_2.
\end{equation}
Let $\bfz = \left(M^n\right)^{1/2} \bfy$. Then $\bfz\neq 0$ and
\begin{align*}
\frac{ \left\| V^n(\bfx)\bfy\right\|^2_2}{\left\| \bfy\right\|^2_{M^n}} &= \frac{ \bfy^T\left(V^n(\bfx)\right)^TV^n(\bfx)\bfy}{ \bfy^T M^n \bfy } \\
&= \frac{ \left( V^n(\bfx)\left(M^n\right)^{-1/2}\bfz\right)^T\left( V^n(\bfx)\left(M^n\right)^{-1/2}\bfz\right)}{\bfz^T\bfz} \\
&= \frac{ \left\| V^n(\bfx)\left(M^n\right)^{-1/2}\bfz\right\|^2_2}{ \| \bfz \|^2_2},
\end{align*}
and so
\begin{equation}
\left\| V^n(\bfx)\right\|_{M^n\rightarrow 2} = \left\| V^n(\bfx)\left(M^n\right)^{-1/2}\right\|_2.
\end{equation}
It follows that
\begin{align*}
\kappa_{M^n\rightarrow 2}\left(V^n(\bfx)\right) &= \left\| V^n(\bfx)\right\|_{M^n\rightarrow 2}\left\| \left(V^n(\bfx)\right)^{-1}\right\|_{2\rightarrow M^n} \\
&= \left\| V^n(\bfx)\left(M^n\right)^{-1/2}\right\|_2 \left\| \left(M^n\right)^{1/2}\left(V^n(\bfx)\right)^{-1} \right\|_2 \\
&=  \left\| V^n(\bfx)\left(M^n\right)^{-1/2}\right\|_2  \left\| \left(V^n(\bfx)\left(M^n\right)^{-1/2}\right)^{-1}\right\|_2 \\
&= \kappa_2\left( V^n(\bfx)\left(M^n\right)^{-1/2}\right).
\end{align*}
\end{proof}

To analyze $\kappa_{2}\left(V^n(\bfx)\left(M^n\right)^{-1/2}\right)$, we will need reference to the Legendre polynomials, mapped from their typical home on $[-1,1]$ to $[0,1]$. For $0\leq j\leq n$, let $L^j(x)$ denote the Legendre polynomial of degree $n$ over $[0,1]$, scaled so that $L^n(1)=1$ and
\begin{equation}
\label{eq:LegendreL2norm}
\| L^j \|^2_{L^2} = \frac{1}{2j+1}.
\end{equation}
The Legendre--Vandermonde matrix is the $(n+1)\times (n+1)$ matrix $\widehat{V}^n(\bfx)$ given by
\begin{equation}
\label{eq:LegendreVandermonde}
\widehat{V}^n_{ij}(\bfx) = L^j(\bfx_i).
\end{equation}
Given any polynomial $p$ of degree at most $n$, let $\bftheta(p)$ denote the vector of $n+1$ coefficients with respect to the Legendre basis. Define $T^n$ to be the $(n+1)\times (n+1)$ matrix satisfying $T^n\bftheta(p)= \bfpi(p)$ for all polynomials $p$ of degree at most $n$. In particular, we have the relationship
\begin{equation}
\label{eq:Vandermondeconversion}
\widehat{V}^n(\bfx) = V^n(\bfx)T^n.
\end{equation}

Let $\{\lambda^n_j\}_{j=0}^n$ be the set of eigenvalues of $M^n$. In \cite{allenkirby2020mass}, it was shown that $M^n$ admits the spectral decomposition
\begin{equation}
\label{eq:spectral}
M^n = Q^n\Lambda^n\left(Q^n\right)^T,
\end{equation}
where
\begin{equation}
Q^n = T^n\diag\left( \sqrt{(2j+1)\lambda^n_j}\right)_{j=0}^n
\end{equation}
is an orthogonal matrix and $\Lambda^n = \diag\left(\lambda^n_j\right)_{j=0}^n$ contains the eigenvalues. This decomposition combined with Lemma~\ref{lem:kapparelation} and the transformation given in (\ref{eq:Vandermondeconversion}) suggest a relationship between the $M^n\rightarrow 2$ condition number of $V^n(x)$ and the $2$-norm condition number of $\widehat{V}^n(\bfx)$ scaled by a diagonal matrix.

\begin{lemma}
\label{lem:kappa2Legendre}
\begin{equation}
\label{eq:kappa2Legendre}
\kappa_{M^n\rightarrow 2}\left(V^n(\bfx)\right) = \kappa_2\left( \widehat{V}^n(\bfx)\diag\left( \sqrt{2j+1}\right)_{j=0}^n\right).
\end{equation}
\end{lemma}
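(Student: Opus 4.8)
The plan is to reduce everything to Lemma~\ref{lem:kapparelation} and then reshape the single matrix $V^n(\bfx)\left(M^n\right)^{-1/2}$ into $\widehat{V}^n(\bfx)\diag(\sqrt{2j+1})_{j=0}^n$ times an orthogonal factor, using the spectral decomposition (\ref{eq:spectral}) and the change of basis (\ref{eq:Vandermondeconversion}). By Lemma~\ref{lem:kapparelation}, it suffices to prove
\begin{equation*}
\kappa_2\left(V^n(\bfx)\left(M^n\right)^{-1/2}\right) = \kappa_2\left(\widehat{V}^n(\bfx)\diag\left(\sqrt{2j+1}\right)_{j=0}^n\right),
\end{equation*}
so the whole argument is an exercise in manipulating $V^n(\bfx)\left(M^n\right)^{-1/2}$.

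First I would collect the ingredients. Since $Q^n$ is orthogonal, (\ref{eq:spectral}) gives $\left(M^n\right)^{-1/2} = Q^n\left(\Lambda^n\right)^{-1/2}\left(Q^n\right)^T$. Writing $S^n = \diag\left(\sqrt{(2j+1)\lambda^n_j}\right)_{j=0}^n$ so that $Q^n = T^nS^n$, orthogonality of $Q^n$ yields $\left(T^n\right)^{-1} = S^n\left(Q^n\right)^T$. Finally, (\ref{eq:Vandermondeconversion}) gives $V^n(\bfx) = \widehat{V}^n(\bfx)\left(T^n\right)^{-1}$.

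The key step is to substitute and cancel. Combining the above,
\begin{align*}
V^n(\bfx)\left(M^n\right)^{-1/2} &= \widehat{V}^n(\bfx)\left(T^n\right)^{-1}Q^n\left(\Lambda^n\right)^{-1/2}\left(Q^n\right)^T \\
&= \widehat{V}^n(\bfx)S^n\left(Q^n\right)^TQ^n\left(\Lambda^n\right)^{-1/2}\left(Q^n\right)^T \\
&= \widehat{V}^n(\bfx)S^n\left(\Lambda^n\right)^{-1/2}\left(Q^n\right)^T,
\end{align*}
using $\left(Q^n\right)^TQ^n = I$. The diagonal product $S^n\left(\Lambda^n\right)^{-1/2}$ collapses precisely to $\diag(\sqrt{2j+1})_{j=0}^n$, since the eigenvalues $\lambda^n_j$ cancel, leaving $V^n(\bfx)\left(M^n\right)^{-1/2} = \widehat{V}^n(\bfx)\diag(\sqrt{2j+1})_{j=0}^n\left(Q^n\right)^T$.

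To finish, I would invoke the invariance of the $2$-norm condition number under orthogonal transformations: singular values are unchanged by right-multiplication by the orthogonal matrix $\left(Q^n\right)^T$, so $\kappa_2\left(\widehat{V}^n(\bfx)\diag(\sqrt{2j+1})_{j=0}^n\left(Q^n\right)^T\right) = \kappa_2\left(\widehat{V}^n(\bfx)\diag(\sqrt{2j+1})_{j=0}^n\right)$, which with Lemma~\ref{lem:kapparelation} gives the claim. I do not expect a genuine obstacle; the only things to watch are the bookkeeping of the orthogonal factors and the observation that the specific scaling built into $Q^n$ is exactly what makes the $\lambda^n_j$ disappear.
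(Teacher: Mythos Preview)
Your proposal is correct and follows essentially the same route as the paper: both use the spectral decomposition $(M^n)^{-1/2}=Q^n(\Lambda^n)^{-1/2}(Q^n)^T$, the relation $Q^n=T^nS^n$ with $S^n=\diag(\sqrt{(2j+1)\lambda^n_j})$, and (\ref{eq:Vandermondeconversion}) to arrive at $V^n(\bfx)(M^n)^{-1/2}=\widehat{V}^n(\bfx)\diag(\sqrt{2j+1})(Q^n)^T$, then invoke orthogonality of $Q^n$ together with Lemma~\ref{lem:kapparelation}. The only cosmetic difference is that you first compute $(T^n)^{-1}=S^n(Q^n)^T$ and substitute, whereas the paper substitutes $Q^n=T^nS^n$ directly into the left factor of the spectral decomposition and uses $V^n(\bfx)T^n=\widehat{V}^n(\bfx)$; the algebra collapses to the same identity.
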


\begin{proof}
We have that
\begin{align*}
V^n(\bfx)\left(M^n\right)^{-1/2} &= V^n(\bfx)Q^n\left(\Lambda^n\right)^{-1/2}\left(Q^n\right)^T \\
&= V^n(\bfx)T^n\diag\left(\sqrt{(2j+1)\lambda^n_j}\right)_{j=0}^n\diag\left(\frac{1}{\sqrt{\lambda^n_j}}\right)_{j=0}^n\left(Q^n\right)^T \\
&= \widehat{V}^n(\bfx)\diag\left(\sqrt{2j+1}\right)_{j=0}^n\left(Q^n\right)^T.
\end{align*}
Since $Q^n$ is orthogonal, the result follows from Lemma~\ref{lem:kapparelation}.
\end{proof}

In \cite{gautschi1983condition}, Gautschi computed the condition number in the Frobenius norm $\| \cdot \|_F$ for Vandermonde matrices associated with families of orthonormal polynomials. Since the diagonal matrix in Lemma~\ref{lem:kappa2Legendre} contains the reciprocal of the $L^2$ norms of the Legendre polynomials, we can adapt his arguments to bound $\left(\widehat{V}^n(\bfx)\diag\left( \sqrt{2j+1}\right)_{j=0}^n\right)^{-1}$ in the Frobenius norm. In addition, the Legendre polynomials have the special property that $|L^j(x)|\leq 1$ for all $0\leq x\leq 1$, and so we can bound the Frobenius norm of $\widehat{V}^n(\bfx)\diag\left( \sqrt{2j+1}\right)_{j=0}^n$. 

For each integer $0\leq j\leq n$, let $\ell^{j,n}$ be the $j^{\text{th}}$ Lagrange polynomial with respect to $\bfx$; that is,
\begin{equation}
\ell^{j,n}(x) = \prod_{i\in\{0,\dots,n\}\setminus\{j\}} \frac{x-\bfx_i}{\bfx_j-\bfx_i}.
\end{equation}
Define $\bfw^n\in\mathbb{R}^{n+1}$ by
\begin{equation}
\bfw^n_j = \| \ell^{j,n} \|_{L^2}.
\end{equation}

\begin{theorem}
\label{thm:kappaMto2}
\begin{equation}
\kappa_{M^n\rightarrow 2}(V^n(\bfx)) \leq (n+1)^{3/2}\| \bfw^n \|_2.
\end{equation}
\end{theorem}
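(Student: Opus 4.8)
The plan is to combine Lemma~\ref{lem:kappa2Legendre} with the elementary bound $\kappa_2(A) \leq \|A\|_F\,\|A^{-1}\|_F$ (which holds since the spectral norm is dominated by the Frobenius norm), and then to estimate each Frobenius norm separately. Writing $A = \widehat{V}^n(\bfx)\diag(\sqrt{2j+1})_{j=0}^n$, Lemma~\ref{lem:kappa2Legendre} gives $\kappa_{M^n\rightarrow 2}(V^n(\bfx)) = \kappa_2(A)$, so it suffices to control $\|A\|_F$ and $\|A^{-1}\|_F$.

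For $\|A\|_F$, I would work entrywise from $A_{ij} = \sqrt{2j+1}\,L^j(\bfx_i)$ and invoke the uniform bound $|L^j(x)| \leq 1$ on $[0,1]$, so that
\[
\|A\|_F^2 = \sum_{i,j=0}^n (2j+1)\,|L^j(\bfx_i)|^2 \leq \sum_{i=0}^n \sum_{j=0}^n (2j+1) = (n+1)^3,
\]
using $\sum_{j=0}^n (2j+1) = (n+1)^2$. This yields $\|A\|_F \leq (n+1)^{3/2}$, which supplies the first factor.

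For $\|A^{-1}\|_F$, the key observation is that the columns of $(\widehat{V}^n(\bfx))^{-1}$ are precisely the Legendre coefficient vectors of the Lagrange polynomials: solving $\widehat{V}^n(\bfx)\bftheta = \bfe_j$ returns $\bftheta(\ell^{j,n})$, since $\ell^{j,n}$ is the unique degree-$n$ polynomial with value $1$ at $\bfx_j$ and $0$ at the remaining nodes. Writing $\ell^{j,n} = \sum_k c_k^j L^k$ so that $((\widehat{V}^n(\bfx))^{-1})_{kj} = c_k^j$, and using $A^{-1} = \diag(1/\sqrt{2j+1})_{j=0}^n(\widehat{V}^n(\bfx))^{-1}$, we get $(A^{-1})_{kj} = c_k^j/\sqrt{2k+1}$. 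Then by orthogonality of the Legendre polynomials together with (\ref{eq:LegendreL2norm}), we have $\|\ell^{j,n}\|_{L^2}^2 = \sum_k |c_k^j|^2/(2k+1)$, and therefore
\[
\|A^{-1}\|_F^2 = \sum_{j=0}^n \sum_{k=0}^n \frac{|c_k^j|^2}{2k+1} = \sum_{j=0}^n \|\ell^{j,n}\|_{L^2}^2 = \|\bfw^n\|_2^2,
\]
so $\|A^{-1}\|_F = \|\bfw^n\|_2$. Multiplying the two estimates gives $\kappa_2(A) \leq (n+1)^{3/2}\|\bfw^n\|_2$, which is the claim.

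The step I expect to require the most care is the identification $\|A^{-1}\|_F = \|\bfw^n\|_2$: one must verify that the diagonal scaling by $1/\sqrt{2j+1}$ is exactly what converts the squared matrix entries into the Legendre weights $1/(2k+1)$ appearing in the expansion of $\|\ell^{j,n}\|_{L^2}^2$, so that the Frobenius norm collapses \emph{exactly} to the sum of squared $L^2$ norms of the Lagrange polynomials rather than merely being bounded by it. The remaining ingredients are the routine entrywise estimate relying on $|L^j| \leq 1$ and the arithmetic identity $\sum_{j=0}^n(2j+1) = (n+1)^2$.
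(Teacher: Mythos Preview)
Your proposal is correct and follows essentially the same route as the paper's proof: invoke Lemma~\ref{lem:kappa2Legendre} to reduce to $\kappa_2(A)$ with $A=\widehat{V}^n(\bfx)\diag(\sqrt{2j+1})$, bound $\|A\|_F\le (n+1)^{3/2}$ via $|L^j|\le 1$ and $\sum_{j=0}^n(2j+1)=(n+1)^2$, and identify $\|A^{-1}\|_F=\|\bfw^n\|_2$ by recognizing the columns of $(\widehat{V}^n(\bfx))^{-1}$ as the Legendre coefficient vectors $\bftheta(\ell^{j,n})$ and using orthogonality. The paper presents the $\|A^{-1}\|_F$ computation starting from $\|\bfw^n\|_2$ and working toward the matrix, whereas you go in the opposite direction, but the content is identical.
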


\begin{proof}
Since $|L^j(x)|\leq 1$ for all $0\leq x\leq 1$, we have that
\begin{align*}
\left\| \widehat{V}^n(\bfx)\diag\left(\sqrt{2j+1}\right)_{j=0}^n\right\|_F &= \left( \sum_{i,j=0}^n (2j+1)\left(L^j(\bfx_i)\right)^2\right)^{1/2} \\
&\leq \left( \sum_{i,j=0}^n (2j+1) \right)^{1/2} \\
&= (n+1)^{3/2}.
\end{align*}
Since 
\begin{equation}
\ell^{j,n}(\bfx_i) = \begin{cases} 1, & \text{if}\ i=j; \\ 0, & \text{if}\ i\neq j; \end{cases}
\end{equation}
we have that $\left(\widehat{V}^n(\bfx)\right)^{-1}_{ij} = \bftheta\left(\ell^{j,n}\right)_i$. Therefore, by (\ref{eq:LegendreL2norm}), we have that
\begin{align*}
\| \bfw^n \|_2 &= \left( \sum_{j=0}^n \left(\bfw^n_j\right)^2\right)^{1/2} \\
&= \left(\sum_{j=0}^n \int_0^1 \left(\ell^{j,n}(x)\right)^2dx\right)^{1/2} \\
&= \left(\sum_{j=0}^n \int_0^1 \sum_{i,k=0}^n \bftheta(\ell^{j,n})_i\bftheta(\ell^{j,n})_kL^i(x)L^k(x)dx\right)^{1/2} \\ 
&= \left(\sum_{j=0}^n \sum_{i,k=0}^n \bftheta(\ell^{j,n})_i\bftheta(\ell^{j,n})_k \int_0^1 L^i(x)L^k(x)dx\right)^{1/2} \\ 
&= \left(\sum_{i,j=0}^n \bftheta(\ell^{j,n})^2_i \int_0^1 (L^i(x))^2dx\right)^{1/2} \\ 
&= \left(\sum_{i,j=0}^n \frac{\bftheta(\ell^{j,n})^2_i}{2i+1}\right)^{1/2} \\
&= \left\| \diag\left( \frac{1}{\sqrt{2i+1}}\right)_{i=0}^n \left(\widehat{V}^n(\bfx)\right)^{-1}\right\|_F \\
&= \left\| \left(\widehat{V}^n(\bfx)\diag\left(\sqrt{2j+1}\right)_{j=0}^n\right)^{-1}\right\|_F.
\end{align*}
Since the condition number in the 2 norm is bounded above by the condition number in the Frobenius norm, the result follows from Lemma~\ref{lem:kappa2Legendre}.
\end{proof}

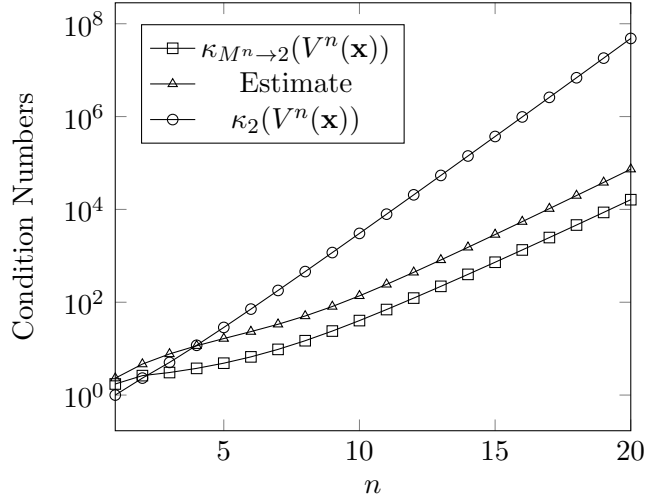
\begin{figure}
\begin{center}
\begin{tikzpicture}
\begin{semilogyaxis}[legend style={at={(0.05,0.95)}, anchor=north west}, xlabel=$n$, ylabel=Condition Numbers, xmin=1, xmax=20, ymin=0]
\addplot [mark=square] table [x=n, y=k2LD, col sep=comma] {conditioning.dat};
\addplot [mark=triangle] table [x=n, y=ub, col sep=comma] {conditioning.dat};
\addplot [mark=o] table [x=n, y=k2V, col sep=comma] {conditioning.dat};
\legend{$\kappa_{M^n\rightarrow 2}(V^n(\bfx))$, Estimate, $\kappa_2(V^n(\bfx))$}
\end{semilogyaxis}
\end{tikzpicture}
\end{center}
\caption{Comparsion of the condition number of $V^n$ in the $M^n\rightarrow 2$ norm and the $2$ norm for $1\leq n\leq 20$, where $V^n$ is the matrix described in Subsection~\ref{ssec:equispaced}. We also include the estimate given in Theorem~\ref{thm:kappaMto2}. We use Lemma~\ref{lem:kappa2Legendre} to compute $\kappa_{M^n\rightarrow 2}(V^n(\bfx))$.}
\label{fig:kappa}
\end{figure}

\section{Higher dimensions}
\label{sec:dimensions}

Now, in the case of equispaced nodes on the $d$-simplex, we can use block-recursive structure as discussed in \cite{kirby2012fast} to give low-complexity algorithms for the inversion of simplicial Bernstein--Vandermonde matrices. This gives, at least for the equispaced lattice, an alternate algorithm for solving simplicial Bernstein interpolation problems to the one worked out in \cite{ainsworth2011bernstein}.

\subsection{Notation and Preliminaries}
\label{ssec:notation}

For an integer $d\geq 1$, let $S_d$ be a nondegenerate simplex in $\mathbb{R}^d$. Let $\{\bfv_i\}_{i=0}^d\subset \mathbb{R}^d$ be the vertices of $S_d$, and let $\{\bfb_i\}_{i=0}^d$ denote the barycentric coordinates of $S_d$. Each $\bfb_i$ is an affine map from $\mathbb{R}^d$ to $\mathbb{R}$ such that
\begin{equation}
\bfb_i(\bfv_j) = \begin{cases} 1, & \text{if}\ i=j; \\ 0, & \text{if}\ i\neq j; \end{cases}
\end{equation}
for each vertex $\bfv_j$. Each $\bfb_i$ is nonnegative on $S_d$, and
\begin{equation}
\sum_{i=0}^d \bfb_i = 1.
\end{equation}
A multiindex $\bfbeta$ of length $d+1$ is a $(d+1)$-tuple of nonnegative integers, written
\begin{equation}
\bfbeta = (\bfbeta_0,\dots,\bfbeta_d).
\end{equation}
The order of $\bfbeta$, denoted $|\bfbeta|$, is given by
\begin{equation}
|\bfbeta| = \sum_{i=0}^d \bfbeta_i.
\end{equation}
The factorial $\bfbeta!$ of a multiindex $\bfbeta$ is defined by
\begin{equation}
\bfbeta! = \prod_{i=0}^d \bfbeta_i.
\end{equation}
For a multiindex $\bfbeta$ of length $d+1$, denote by $\bfbeta'$ the multiindex of length $d$ given by
\begin{equation}
\bfbeta' = (\bfbeta_1, \dots, \bfbeta_d).
\end{equation}
Given a nonnegative integer $b$ and a multiindex $\bfbeta' = (\bfbeta_1,\dots, \bfbeta_d)$ of length $d$, define a new multiindex $b\vdash\bfbeta$ of length $d+1$ by
\begin{equation}
b\vdash\bfbeta' = (b,\bfbeta_1,\dots,\bfbeta_d).
\end{equation}
In particular,
\begin{equation}
\bfbeta = \bfbeta_0\vdash\bfbeta'.
\end{equation}
Multiindices have a natural partial ordering given by
\begin{equation}
\label{eq:ordering}
\bfbeta \leq \widetilde{\bfbeta}\qquad \text{if and only if}\qquad \bfbeta_i\leq \widetilde{\bfbeta}_i\quad \text{for all}\quad 0\leq i\leq d.
\end{equation}
The Bernstein polynomials of degree $n$ on the $d$-simplex $S_d$ are defined by
\begin{equation}
B^n_{\bfbeta} = \frac{n!}{\bfbeta!} \prod_{i=0}^d \bfb_i^{\bfbeta_i}.
\end{equation}
The complete set of Bernstein polynomials $\{B^n_{\bfbeta}\}_{|\bfbeta|=n}$ form a basis for polynomials in $d$ variables of complete degree at most $n$.

If $n_0\leq n$, then any polynomial expressed in the basis $\{B^{n_0}_{\bfbeta}\}_{|\bfbeta|=n_0}$ can also be expressed in the basis $\{B^n_{\bfbeta}\}_{|\bfbeta|=n}$. We denote by $E^{d,n_0,n}$ the $\binom{n+d}{d}\times\binom{n_0+d}{d}$ matrix that maps the coefficients of the degree $n_0$ representation to the coefficients of the degree $n$ representation. The matrix $E^{d,n_0,n}$ is sparse and can be applied matrix-free \cite{kirby2017fast}, if desired.

For a nonnegative integer $m$ and a set of distinct nodes $\{\bfx^m_{\bfalpha}\}_{|\bfalpha|=m}\subset S_d$, define the Bernstein--Vandermonde matrix $V^{d,m,n}$ to be the $\binom{m+d}{d}\times\binom{n+d}{d}$ matrix given by
\begin{equation}
V^{d,m,n}_{\bfalpha\bfbeta} = B^n_{\bfbeta}(\bfx^m_{\bfalpha})
\end{equation}

\begin{theorem}
\label{thm:elev}
Let $d,m,n_0$, and $n$ be nonnegative integers with $d\geq 1$ and $n_0\leq n$, and let $\{\bfx^m_{\bfalpha}\}_{|\bfalpha|=m}$ be distinct nodes contained in the $d$-simplex $S_d$. Then
\begin{equation}
\label{eq:elev}
V^{d,m,n}E^{d,n_0,n}=V^{d,m,n_0}.
\end{equation}
\end{theorem}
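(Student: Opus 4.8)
The plan is to reduce the identity to the defining property of the degree-elevation matrix $E^{d,n_0,n}$ together with the observation that both sides of (\ref{eq:elev}) are just evaluations of Bernstein polynomials at the nodes. First I would unwind the definition of $E^{d,n_0,n}$: since it carries the coefficients of a degree-$n_0$ representation to those of the degree-$n$ representation of the \emph{same} polynomial, applying it to the standard basis vector indexed by $\bfbeta$ produces exactly the degree-$n$ Bernstein coefficients of $B^{n_0}_{\bfbeta}$. In other words, the column of $E^{d,n_0,n}$ indexed by $\bfbeta$ records these coefficients, so for every multiindex $\bfbeta$ with $|\bfbeta|=n_0$ one has the pointwise polynomial identity
\begin{equation*}
B^{n_0}_{\bfbeta} = \sum_{|\mathbf{\gamma}|=n} E^{d,n_0,n}_{\mathbf{\gamma}\bfbeta}\, B^n_{\mathbf{\gamma}},
\end{equation*}
valid at every point of $S_d$ precisely because degree elevation does not alter the underlying polynomial.

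Next I would compute the $(\bfalpha,\bfbeta)$ entry of the left-hand side of (\ref{eq:elev}) directly from the definitions. Writing out the matrix product gives
\begin{equation*}
\left(V^{d,m,n}E^{d,n_0,n}\right)_{\bfalpha\bfbeta} = \sum_{|\mathbf{\gamma}|=n} V^{d,m,n}_{\bfalpha\mathbf{\gamma}}\, E^{d,n_0,n}_{\mathbf{\gamma}\bfbeta} = \sum_{|\mathbf{\gamma}|=n} B^n_{\mathbf{\gamma}}(\bfx^m_{\bfalpha})\, E^{d,n_0,n}_{\mathbf{\gamma}\bfbeta}.
\end{equation*}
The inner sum is exactly the right-hand side of the elevation identity above, evaluated at the node $\bfx^m_{\bfalpha}$, so it collapses to $B^{n_0}_{\bfbeta}(\bfx^m_{\bfalpha})$. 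Recognizing this last quantity as $V^{d,m,n_0}_{\bfalpha\bfbeta}$ completes the entrywise comparison and hence establishes the identity.

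There is no genuine analytic obstacle here: the statement is essentially a restatement of the fact that $E^{d,n_0,n}$ is constructed to preserve the polynomial while changing its Bernstein representation, so evaluation at a node commutes with elevation. The only point demanding care is the index bookkeeping — keeping straight the three matrix shapes ($\binom{m+d}{d}\times\binom{n+d}{d}$, then $\binom{n+d}{d}\times\binom{n_0+d}{d}$, with product $\binom{m+d}{d}\times\binom{n_0+d}{d}$, matching $V^{d,m,n_0}$) and confirming that it is the \emph{columns} of $E^{d,n_0,n}$, not its rows, that carry the elevated coefficients. Once that convention is fixed, the calculation is immediate, and notably it uses nothing about the locations of the nodes beyond the fact that the elevation identity holds pointwise on $S_d$.
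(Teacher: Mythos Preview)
Your proof is correct and follows essentially the same approach as the paper: both arguments rest on the observation that $E^{d,n_0,n}$ changes only the representation, not the polynomial, so evaluating before or after elevation gives the same values. The paper states this at the level of an arbitrary polynomial $p$, whereas you carry it out entrywise on basis polynomials, but the underlying idea is identical.
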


\begin{proof}
Given a polynomial $p\in\Span\{B^{n_0}_{\bfbeta}\}_{|\bfbeta|=n_0}$, the matrix $V^{d,m,n_0}$ evaluates $p$ at each of the nodes $\bfx^m_{\bfalpha}$. On the other hand, the matrix $V^{d,m,n}E^{d,n_0,n}$ evaluates the degree $n$ representation of $p$ at each of the nodes $\bfx^m_{\bfalpha}$. Since we are evaluating at the same nodes and the polynomial has not been modified, both evaluations must give the same result.
\end{proof}

The partial ordering (\ref{eq:ordering}) implies a natural way to order the entries of $V^{d,m,n}$ and also imposes a block structure on $V^{d,m,n}$ by dividing the matrix into sections where $\bfalpha_0$ and $\bfbeta_0$ are constant. For integers $0\leq \bfalpha_0\leq m$ and $0\leq \bfbeta_0\leq n$, let $V^{d,m,n}_{\bfalpha_0\bfbeta_0}$ denote the $\binom{m-\bfalpha_0+d}{d}\times\binom{n-\bfbeta_0+d}{d}$ submatrix of $V^{d,m,n}$ whose entries satisfy
\begin{equation}
\left(V^{d,m,n}_{\bfalpha_0\bfbeta_0}\right)_{\bfalpha'\bfbeta'} = V^{d,m,n}_{\left(\bfalpha_0\vdash\bfalpha'\right)\left(\bfbeta_0\vdash\bfbeta'\right)}.
\end{equation}

\subsection{Degree Reduction}
\label{ssec:reduction}

We now consider the case of equispaced nodes on the $d$-simplex $S^d$. For this case, the entries of the Bernstein--Vandermonde matrix are given by
\begin{equation}
\label{eq:entries}
V^{d,m,n}_{\bfalpha\bfbeta} = \frac{n!}{\bfbeta!}\frac{1}{m^n} \prod_{i=0}^d \bfalpha_i^{\bfbeta_i}.
\end{equation}
This representation of the entries allows us to represent the submatrix $V^{d,m,n}_{\bfalpha_0\bfbeta_0}$ in terms of a lower-dimensional Bernstein--Vandermonde matrix.

\begin{theorem}
\label{thm:reduction}
Let $d>1$ and $m,n\geq 0$ be integers, and let $\bfalpha_0,\bfbeta_0$ be integers with $0\leq\bfalpha_0\leq m$ and $0\leq\bfbeta_0\leq n$. Then
\begin{equation}
\label{eq:reduction}
V^{d,m,n}_{\bfalpha_0\bfbeta_0} = m_{\bfalpha_0\bfbeta_0}V^{d-1,m-\bfalpha_0,n-\beta_0},
\end{equation}
where
\begin{equation}
m_{\bfalpha_0\bfbeta_0} = \binom{n}{\bfbeta_0}(\bfalpha_0/m)^{\bfbeta_0}(1-\bfalpha_0/m)^{n-\bfbeta_0}.
\end{equation}
\end{theorem}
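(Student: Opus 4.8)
The plan is to prove the identity entrywise, working directly from the closed form (\ref{eq:entries}) for the entries of the equispaced Bernstein--Vandermonde matrix. Both sides of (\ref{eq:reduction}) are matrices of the same size $\binom{m-\bfalpha_0+d}{d}\times\binom{n-\bfbeta_0+d}{d}$, indexed by multiindices $\bfalpha'$ and $\bfbeta'$ of length $d$ satisfying $|\bfalpha'|=m-\bfalpha_0$ and $|\bfbeta'|=n-\bfbeta_0$, so it suffices to compare the $(\bfalpha',\bfbeta')$ entries. First I would unfold the definition of the submatrix together with the $\vdash$ operation to write
\[
\left(V^{d,m,n}_{\bfalpha_0\bfbeta_0}\right)_{\bfalpha'\bfbeta'} = V^{d,m,n}_{(\bfalpha_0\vdash\bfalpha')(\bfbeta_0\vdash\bfbeta')} = \frac{n!}{(\bfbeta_0\vdash\bfbeta')!}\frac{1}{m^n}\prod_{i=0}^d (\bfalpha_0\vdash\bfalpha')_i^{(\bfbeta_0\vdash\bfbeta')_i},
\]
where the last equality is (\ref{eq:entries}).

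The next step is to peel off the zeroth-index contributions. Since the multiindex factorial is multiplicative, $(\bfbeta_0\vdash\bfbeta')! = \bfbeta_0!\,\bfbeta'!$, and since the $i=0$ factor of the product is exactly $\bfalpha_0^{\bfbeta_0}$, the entry becomes
\[
\left(V^{d,m,n}_{\bfalpha_0\bfbeta_0}\right)_{\bfalpha'\bfbeta'} = \frac{n!}{\bfbeta_0!\,\bfbeta'!}\frac{\bfalpha_0^{\bfbeta_0}}{m^n}\prod_{i=1}^d \bfalpha_i^{\bfbeta_i}.
\]
I would then recognize the trailing product $\prod_{i=1}^d \bfalpha_i^{\bfbeta_i}$ as the one appearing in the entry of the lower-dimensional matrix: reindexing the length-$d$ tails $\bfalpha'$ and $\bfbeta'$ as multiindices on the $(d-1)$-simplex, (\ref{eq:entries}) gives
\[
V^{d-1,m-\bfalpha_0,n-\bfbeta_0}_{\bfalpha'\bfbeta'} = \frac{(n-\bfbeta_0)!}{\bfbeta'!}\frac{1}{(m-\bfalpha_0)^{n-\bfbeta_0}}\prod_{i=1}^d \bfalpha_i^{\bfbeta_i}.
\]

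Finally, I would expand $m_{\bfalpha_0\bfbeta_0} = \frac{n!}{\bfbeta_0!(n-\bfbeta_0)!}\,\bfalpha_0^{\bfbeta_0}(m-\bfalpha_0)^{n-\bfbeta_0}/m^n$ and multiply it against the displayed entry of $V^{d-1,m-\bfalpha_0,n-\bfbeta_0}$; the factors $(n-\bfbeta_0)!$ and $(m-\bfalpha_0)^{n-\bfbeta_0}$ cancel, leaving precisely the expression obtained for $\left(V^{d,m,n}_{\bfalpha_0\bfbeta_0}\right)_{\bfalpha'\bfbeta'}$ above, which closes the argument. There is no conceptual obstacle here; the computation is routine once the indices are set up. The only points demanding care are bookkeeping ones: identifying the length-$d$ tail multiindices with genuine multiindices on the $(d-1)$-simplex (so that $V^{d-1,m-\bfalpha_0,n-\bfbeta_0}$ is well-defined, which is exactly why the constraints $|\bfalpha'|=m-\bfalpha_0$ and $|\bfbeta'|=n-\bfbeta_0$ are needed), and tracking the powers of $m$ versus $m-\bfalpha_0$---the submatrix inherits the global normalization $m^{-n}$ whereas the lower-dimensional matrix carries $(m-\bfalpha_0)^{-(n-\bfbeta_0)}$, and it is precisely the factor $(1-\bfalpha_0/m)^{n-\bfbeta_0}$ in $m_{\bfalpha_0\bfbeta_0}$ that reconciles the two.
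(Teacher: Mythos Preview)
Your argument is essentially the paper's own proof: both compute the $(\bfalpha',\bfbeta')$ entry of each side directly from the closed form~(\ref{eq:entries}), peel off the $i=0$ factors, and match the remaining product to the lower-dimensional entry. The algebra and the cancellation of $(n-\bfbeta_0)!$ and $(m-\bfalpha_0)^{n-\bfbeta_0}$ are exactly what the paper does.

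There is one boundary case you skipped that the paper treats separately: when $\bfalpha_0=m$ and $\bfbeta_0\neq n$. In that situation your displayed formula for $V^{d-1,m-\bfalpha_0,n-\bfbeta_0}_{\bfalpha'\bfbeta'}$ invokes~(\ref{eq:entries}) with the factor $1/(m-\bfalpha_0)^{n-\bfbeta_0}=1/0^{n-\bfbeta_0}$, which is undefined, so the cancellation argument does not literally go through. The paper disposes of this case up front by observing that both sides of~(\ref{eq:reduction}) vanish: the left because every $\bfalpha_i=0$ for $i\ge 1$ forces $\prod_{i=1}^d\bfalpha_i^{\bfbeta_i}=0$ (some $\bfbeta_i>0$ since $|\bfbeta'|=n-\bfbeta_0>0$), and the right because $m_{\bfalpha_0\bfbeta_0}$ contains the factor $(1-\bfalpha_0/m)^{n-\bfbeta_0}=0$. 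Adding that one sentence makes your proof complete.
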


\begin{proof}
If $\bfalpha_0=m$ and $\bfbeta_0\neq n$, then both sides equal zero and we are done. Otherwise, by (\ref{eq:entries}), we have that
\begin{equation}
V^{d-1,m-\bfalpha_0,n-\bfbeta_0}_{\bfalpha'\bfbeta'} = \frac{(n-\bfbeta_0)!}{\bfbeta'!}\frac{1}{(m-\bfalpha_0)^{n-\bfbeta_0}}\prod_{i=1}^d \bfalpha_i^{\bfbeta_i}.
\end{equation}
On the other hand,
\begin{align*}
\left(V^{d,m,n}_{\bfalpha_0\bfbeta_0}\right)_{\bfalpha'\bfbeta'} &= V^{d,m,n}_{\bfalpha\bfbeta} \\
&= \frac{n!}{\bfbeta!}\frac{1}{m^n} \prod_{i=0}^d \bfalpha_i^{\bfbeta_i} \\
&= \frac{n!}{\bfbeta_0!(n-\bfbeta_0)!}\frac{(n-\bfbeta_0)!}{\bfbeta'!} \frac{\bfalpha_0^{\bfbeta_0}}{m^{\bfbeta_0}} \frac{(m-\bfalpha_0)^{n-\bfbeta_0}}{m^{n-\bfbeta_0}} \frac{1}{(m-\bfalpha_0)^{n-\bfbeta_0}}\prod_{i=1}^d \bfalpha_i^{\bfbeta_i},
\end{align*}
where we have separated terms containing $\bfalpha_0$ and $\bfbeta_0$ and multiplied and divided  by $(n-\bfbeta_0)!$ and $(m-\bfalpha_0)^{n-\bfbeta_0}$. Since
\begin{equation}
\frac{n!}{\bfbeta_0!(n-\bfbeta_0)!}\frac{\bfalpha_0^{\bfbeta_0}}{m^{\bfbeta_0}}\frac{(m-\alpha_0)^{n-\bfbeta_0}}{m^{n-\bfbeta_0}} = \binom{n}{\bfbeta_0}(\bfalpha_0/m)^{\bfbeta_0}(1-\bfalpha_0/m)^{n-\bfbeta_0},
\end{equation}
we have the desired result.
\end{proof}

The fact that the blocks are multiples of lower-dimensional matrices is analogous to what has been observed for other matrices related to Bernstein polynomials. For example, the Bernstein--Vandermonde matrix associated with the zeroes of Legendre polynomials has a similar structure \cite{kirby2012fast}, and so does the Bernstein mass matrix \cite{kirby2017fast}. 

We recognize that $m_{\bfalpha_0\bfbeta_0}$ describes a univariate Bernstein polynomial of degree $n$ being evaluated at equispaced points. Therefore, if $V^n$ is the one-dimensional Bernstein--Vandermonde matrix associated with equispaced nodes as described in Subsection~\ref{ssec:equispaced}, then we have that $V^{d,n,n}$ admits the block structure
\begin{equation}
\label{eq:block}
V^{d,n,n} = 
\begin{pmatrix}
V^n_{00}V^{d-1,n,n} & V^n_{01}V^{d-1,n,n-1} & \cdots & V^n_{0n}V^{d-1,n,0} \\
V^n_{10}V^{d-1,n-1,n} & V^n_{11}V^{d-1,n-1,n-1} & \cdots & V^n_{1n}V^{d-1,n-1,0} \\
\vdots & \vdots & \ddots & \vdots \\
V^n_{n0}V^{d-1,0,n} & V^n_{n1}V^{d-1,0,n-1} & \cdots & V^n_{nn}V^{d-1,0,0}
\end{pmatrix}.
\end{equation}
We can use this block structure and Theorem~\ref{thm:elev} to perform block Gaussian elimination on $V^{d,n,n}$. This method was used in \cite{kirby2017fast} to obtain the block $LU$ decomposition of the Bernstein mass matrix. In the same way, we have the following:

\begin{theorem}
\label{thm:LU}
Let $V^n$ be the one-dimensional Bernstein--Vandermonde matrix associated with equispaced nodes as described in Subsection~\ref{ssec:equispaced}. Suppose $V^n=L^nU^n$ is the $LU$ decomposition of $V^n$. Then
\begin{equation}
V^{d,n,n}=L^{d,n}U^{d,n},
\end{equation}
where $L^{d,n}$ is the block lower triangular matrix with blocks given by
\begin{equation}
L^{d,n}_{\bfalpha_0\bfbeta_0} = L^n_{\bfalpha_0\bfbeta_0}V^{d-1,n-\bfalpha_0,n-\bfbeta_0}
\end{equation}
and $U^{d,n}$ is the block upper triangular matrix with blocks given by
\begin{equation}
U^{d,n}_{\bfalpha_0\bfbeta_0} = U^n_{\bfalpha_0\bfbeta_0}E^{d-1,n-\bfbeta_0,n-\bfalpha_0}.
\end{equation}
\end{theorem}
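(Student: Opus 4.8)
The plan is to verify the claimed factorization block-by-block, reducing everything to the scalar identity $V^n = L^n U^n$ together with the degree-elevation relation of Theorem~\ref{thm:elev}. First I would record two structural observations. Since $L^n$ is lower triangular and $U^n$ is upper triangular, the scalar entries $L^n_{\bfalpha_0 k}$ and $U^n_{k\bfbeta_0}$ vanish unless $k\leq\bfalpha_0$ and $k\leq\bfbeta_0$ respectively; hence $L^{d,n}$ is block lower triangular and $U^{d,n}$ is block upper triangular, and the elevation matrix $E^{d-1,n-\bfbeta_0,n-\bfalpha_0}$ appearing in $U^{d,n}_{\bfalpha_0\bfbeta_0}$ is well defined exactly on the potentially nonzero blocks, where $\bfalpha_0\leq\bfbeta_0$. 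Second, by Theorem~\ref{thm:reduction} applied with $m=n$, the $(\bfalpha_0,\bfbeta_0)$ block of $V^{d,n,n}$ is $V^n_{\bfalpha_0\bfbeta_0}\,V^{d-1,n-\bfalpha_0,n-\bfbeta_0}$, as already displayed in (\ref{eq:block}); this is the target I must reproduce.

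Next I would compute the $(\bfalpha_0,\bfbeta_0)$ block of the product by the rule for block matrix multiplication. Because the scalar factors commute past the matrix factors and the triangularity truncates the range of summation, this gives
\begin{align*}
\left(L^{d,n}U^{d,n}\right)_{\bfalpha_0\bfbeta_0}
&= \sum_{k} L^{d,n}_{\bfalpha_0 k}\, U^{d,n}_{k\bfbeta_0} \\
&= \sum_{k=0}^{\min\{\bfalpha_0,\bfbeta_0\}} L^n_{\bfalpha_0 k}\, U^n_{k\bfbeta_0}\, V^{d-1,n-\bfalpha_0,n-k}\, E^{d-1,n-\bfbeta_0,n-k}.
\end{align*}

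The key step, and the one I expect to carry the real content, is to collapse the matrix product $V^{d-1,n-\bfalpha_0,n-k}\,E^{d-1,n-\bfbeta_0,n-k}$. Since $k\leq\bfbeta_0$ on every surviving term, we have $n-\bfbeta_0\leq n-k$, so Theorem~\ref{thm:elev} applied in dimension $d-1$ with column degrees $n_0=n-\bfbeta_0$ and $n=n-k$ yields $V^{d-1,n-\bfalpha_0,n-k}\,E^{d-1,n-\bfbeta_0,n-k}=V^{d-1,n-\bfalpha_0,n-\bfbeta_0}$, which is independent of the summation index $k$. I can therefore factor this common matrix out of the sum, leaving
\begin{equation*}
\left(\sum_{k} L^n_{\bfalpha_0 k}\, U^n_{k\bfbeta_0}\right) V^{d-1,n-\bfalpha_0,n-\bfbeta_0} = V^n_{\bfalpha_0\bfbeta_0}\, V^{d-1,n-\bfalpha_0,n-\bfbeta_0},
\end{equation*}
using $V^n=L^nU^n$. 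By the block structure (\ref{eq:block}) this is precisely the $(\bfalpha_0,\bfbeta_0)$ block of $V^{d,n,n}$, which completes the verification.

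The only subtlety to watch is that the degree elevation must point in the correct direction, raising $n-\bfbeta_0$ up to $n-k$ rather than the reverse; this is guaranteed by the triangularity bound $k\leq\bfbeta_0$, and it is exactly what makes the two block indices $\bfalpha_0$ and $\bfbeta_0$ decouple cleanly so that the inner sum reconstitutes the scalar $LU$ product. Everything else is bookkeeping on binomial-sized block dimensions, which match automatically once the elevation identity is in place.
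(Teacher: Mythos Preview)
Your proof is correct and follows essentially the same approach as the paper's: compute the $(\bfalpha_0,\bfbeta_0)$ block of $L^{d,n}U^{d,n}$, apply Theorem~\ref{thm:elev} to collapse $V^{d-1,n-\bfalpha_0,n-k}E^{d-1,n-\bfbeta_0,n-k}$ to $V^{d-1,n-\bfalpha_0,n-\bfbeta_0}$, factor this out of the sum, and use the scalar identity $V^n=L^nU^n$. Your treatment is slightly more careful in restricting the summation index to $k\leq\min\{\bfalpha_0,\bfbeta_0\}$ and in noting when $E^{d-1,n-\bfbeta_0,n-\bfalpha_0}$ is well defined, but the argument is the same.
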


\begin{proof}
By Theorem~\ref{thm:elev}, if $0\leq \bfalpha_0,\bfbeta_0\leq n$ and $\bm{\gamma}_0\leq \bfbeta_0$, then
\begin{align*}
L^{d,n}_{\bfalpha_0\bm{\gamma}_0}U^{d,n}_{\bm{\gamma}_0\bfbeta_0} &=  L^n_{\bfalpha_0\bm{\gamma}_0}U^n_{\bm{\gamma}_0\bfbeta_0}V^{d-1,n-\bfalpha_0,n-\bm{\gamma}_0}E^{d-1,n-\bfbeta_0,n-\bm{\gamma}_0} \\
&= L^n_{\bfalpha_0\bm{\gamma}_0}U^n_{\bm{\gamma}_0\bfbeta_0} V^{d-1,n-\bfalpha_0,n-\bfbeta_0}.
\end{align*}
Therefore,
\begin{align*}
\left(L^{d,n}U^{d,n}\right)_{\bfalpha_0\bfbeta_0} &= \sum_{\bm{\gamma}_0=0}^{\bfbeta_0} L^{d,n}_{\bfalpha_0\bm{\gamma}_0}U^{d,n}_{\bm{\gamma}_0\bfbeta_0} \\
&= \left( \sum_{\bm{\gamma}_0=0}^{\bfbeta_0} L^n_{\bfalpha_0\bm{\gamma}_0}U^n_{\bm{\gamma}_0\bfbeta_0}\right) V^{d-1,n-\bfalpha_0,n-\bfbeta_0} \\
&= V^n_{\bfalpha_0\bfbeta_0}V^{d-1,n-\bfalpha_0,n-\bfbeta_0} \\
&= V^{d,n,n}_{\bfalpha_0\bfbeta_0}.
\end{align*}
\end{proof}

\section{Numerical results}

Now, we consider the accuracy of the methods described above on several problems. For all of the problems, we chose random solution vectors, computed the right-hand side by matrix multiplication, and then attempted to recover the solution. The numerical results are run in double precision arithmetic on a 2014 Macbook Air running macOS 10.13 and using Python 2.7.13. Cholesky factorization and FFTs are performed using the \texttt{numpy} (v1.12.0) function calls.  Also, because our code is a mix of pure Python and low-level compiled libraries, timings are not terribly informative.  Consequently, we focus on assessing the stability and accuracy of our methods.  If future work leads to more stable fast algorithms, greater care will be afforded to tuning our implementations for performance.

In Figure~\ref{fig:equispacedinfo}, we consider the case of equispaced nodes. When considering the Euclidean norm of the error (Figure~\ref{subfig:equispaced2err}), the $LU$ factorization and the Newton algorithm have the best perfomance, followed by the DFT-based application and multiplying by the inverse. We observe that the $LU$ and Newton methods have comparable performance, as do the DFT-based algorithm and multiplying by the inverse. This is in contrast to what was observed in \cite{allenkirby2020mass}, where a similar DFT-based algorithm quickly became unstable. The same behavior can be observed when comparing the Euclidean norm of the residual (Figure~\ref{subfig:equispaced2res}), although some separation does occur between the Newton method and the $LU$ factorization. Since the solution vector can be viewed as the Bernstein coefficients of the interpolation polynomial, we also measure the $L^2$ difference between the exact and computed solutions (Figure~\ref{subfig:equispacedMerr}); equivalently, we measure the relative $M^n$ error, where $M^n$ is the Bernstein mass matrix given in (\ref{eq:mass}). All four solution methods have very similar behavior in the $M^n$ norm as they do when considering the Euclidean norm of the residual.

To ensure that the behavior of the solution methods does not depend on the choice of nodes, we also considered the case where the nodes $\bfx_j$ are randomly selected from $[j/(n+1),(j+1)/(n+1))$ for each $0\leq j\leq n$ (Figure~\ref{fig:randominfo}); however, there was no significant difference in the quantities measured between this case and the equispaced case.

We also used the block $LU$ decomposition given in Theorem~\ref{thm:LU} combined with the one-dimensional $LU$ algorithm to solve the interpolation problem for equispaced nodes on the $d$-simplex for $d=2$ and $d=3$ (Figure~\ref{fig:blockinfo}). Due to the recursive nature of the algorithm, the quantities measured are very similar to the ones observed for equispaced nodes.

\begin{figure}
\centering
\begin{subfigure}{0.475\linewidth}
\centering
\begin{tikzpicture}[scale=0.7]
\begin{semilogyaxis}[legend style={at={(0.05,0.95)}, anchor=north west}, xlabel=$n$, ylabel=$\| \bfc-\widehat{\bfc} \|_2 / \| \bfc \|_2$, xmin=1, xmax=20, ymin=1e-18]
\addplot [mark=square] table [x=n, y=BezoutL2err, col sep=comma] {equispaced_info.dat};
\addplot [mark=triangle] table [x=n, y=DFTL2err, col sep=comma] {equispaced_info.dat};
\addplot [mark=diamond] table [x=n, y=LUL2err, col sep=comma] {equispaced_info.dat};
\addplot[mark=o] table [x=n, y=NewtonL2err, col sep=comma]{equispaced_info.dat};
\legend{B\'{e}zout, DFT, $LU$, Newton}
\end{semilogyaxis}
\end{tikzpicture}
\caption{Error in the 2-norm.}
\label{subfig:equispaced2err}
\end{subfigure}
\hfill
\begin{subfigure}{0.475\linewidth}
\centering
\begin{tikzpicture}[scale=0.7]
\begin{semilogyaxis}[legend style={at={(0.05,0.95)}, anchor=north west}, xlabel=$n$, ylabel=$\| \bfc-\widehat{\bfc} \|_{M^n} / \| \bfc \|_{M^n}$, xmin=1, xmax=20, ymin=1e-18]
\addplot [mark=square] table [x=n, y=BezoutMerr, col sep=comma] {equispaced_info.dat};
\addplot [mark=triangle] table [x=n, y=DFTMerr, col sep=comma] {equispaced_info.dat};
\addplot [mark=diamond] table [x=n, y=LUMerr, col sep=comma] {equispaced_info.dat};
\addplot[mark=o] table [x=n, y=NewtonMerr, col sep=comma]{equispaced_info.dat};
\legend{B\'{e}zout, DFT, $LU$, Newton}
\end{semilogyaxis}
\end{tikzpicture}
\caption{Error in the $M^n$ norm.}
\label{subfig:equispacedMerr}
\end{subfigure}
\vskip\baselineskip
\begin{subfigure}{\linewidth}
\centering
\begin{tikzpicture}[scale=0.7]
\begin{semilogyaxis}[legend style={at={(0.05,0.95)}, anchor=north west}, xlabel=$n$, ylabel=$\| V^n\widehat{\bfc}-\bfb \|_2$, xmin=1, xmax=20, ymin=1e-18]
\addplot [mark=square] table [x=n, y=Bezoutres, col sep=comma] {equispaced_info.dat};
\addplot [mark=triangle] table [x=n, y=DFTres, col sep=comma] {equispaced_info.dat};
\addplot [mark=diamond] table [x=n, y=LUres, col sep=comma] {equispaced_info.dat};
\addplot[mark=o] table [x=n, y=Newtonres, col sep=comma]{equispaced_info.dat};
\legend{B\'{e}zout, DFT, $LU$, Newton}
\end{semilogyaxis}
\end{tikzpicture}
\caption{Residual in the 2-norm.}
\label{subfig:equispaced2res}
\end{subfigure}
\caption{Error/residual in using the methods described in Section~\ref{sec:apply} to solve $V^n\bfc=\bfb$ for $1\leq n\leq 20$, where $V^n$ is the matrix described in Subsection~\ref{ssec:equispaced} and $\bfb$ is a random vector in $[-1,1]^{n+1}$. B\'{e}zout refers to Corollary~\ref{cor:bezout}, DFT refers to Corollary~\ref{cor:equispacedinverse}, $LU$ refers to $LU$ decomposition of $V^n$, and Newton refers to the Ainsworth--Sanchez algorithm. We use $\widehat{\bfc}$ to denote the computed solution.}
\label{fig:equispacedinfo}
\end{figure}
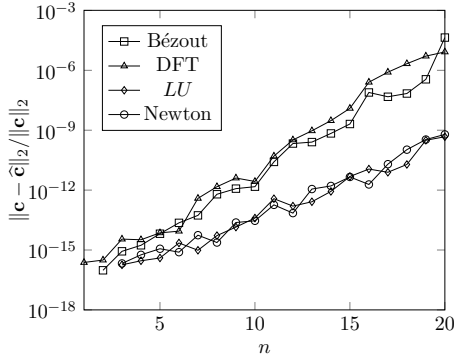
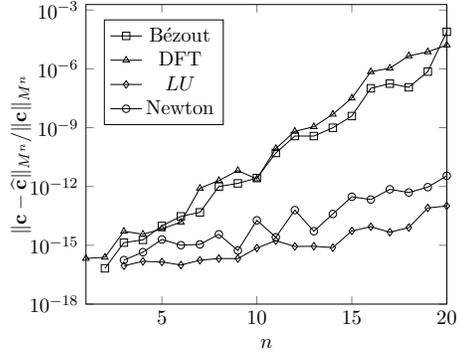
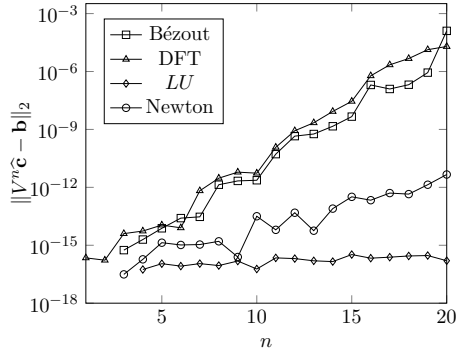

\begin{figure}
\centering
\begin{subfigure}{0.475\linewidth}
\centering
\begin{tikzpicture}[scale=0.7]
\begin{semilogyaxis}[legend style={at={(0.05,0.95)}, anchor=north west}, xlabel=$n$, ylabel=$\| \bfc-\widehat{\bfc} \|_2 / \| \bfc \|_2$, xmin=1, xmax=20, ymin=1e-18]
\addplot [mark=square] table [x=n, y=BezoutL2err, col sep=comma] {random_info.dat};
\addplot [mark=triangle] table [x=n, y=DFTL2err, col sep=comma] {random_info.dat};
\addplot [mark=diamond] table [x=n, y=LUL2err, col sep=comma] {random_info.dat};
\addplot[mark=o] table [x=n, y=NewtonL2err, col sep=comma]{random_info.dat};
\legend{B\'{e}zout, DFT, $LU$, Newton}
\end{semilogyaxis}
\end{tikzpicture}
\caption{Error in the 2-norm.}
\label{subfig:random2err}
\end{subfigure}
\hfill
\begin{subfigure}{0.475\linewidth}
\centering
\begin{tikzpicture}[scale=0.7]
\begin{semilogyaxis}[legend style={at={(0.05,0.95)}, anchor=north west}, xlabel=$n$, ylabel=$\| \bfc-\widehat{\bfc} \|_{M^n} / \| \bfc \|_{M^n}$, xmin=1, xmax=20, ymin=1e-18]
\addplot [mark=square] table [x=n, y=BezoutMerr, col sep=comma] {random_info.dat};
\addplot [mark=triangle] table [x=n, y=DFTMerr, col sep=comma] {random_info.dat};
\addplot [mark=diamond] table [x=n, y=LUMerr, col sep=comma] {random_info.dat};
\addplot[mark=o] table [x=n, y=NewtonMerr, col sep=comma]{random_info.dat};
\legend{B\'{e}zout, DFT, $LU$, Newton}
\end{semilogyaxis}
\end{tikzpicture}
\caption{Error in the $M^n$ norm.}
\label{subfig:randomMerr}
\end{subfigure}
\vskip\baselineskip
\begin{subfigure}{\linewidth}
\centering
\begin{tikzpicture}[scale=0.7]
\begin{semilogyaxis}[legend style={at={(0.05,0.95)}, anchor=north west}, xlabel=$n$, ylabel=$\| V^n(\bfx)\widehat{\bfc}-\bfb \|_2$, xmin=1, xmax=20, ymin=1e-18]
\addplot [mark=square] table [x=n, y=Bezoutres, col sep=comma] {random_info.dat};
\addplot [mark=triangle] table [x=n, y=DFTres, col sep=comma] {random_info.dat};
\addplot [mark=diamond] table [x=n, y=LUres, col sep=comma] {random_info.dat};
\addplot[mark=o] table [x=n, y=Newtonres, col sep=comma]{random_info.dat};
\legend{B\'{e}zout, DFT, $LU$, Newton}
\end{semilogyaxis}
\end{tikzpicture}
\caption{Residual in the 2-norm.}
\label{subfig:random2res}
\end{subfigure}
\caption{Error/residual in using the methods described in Section~\ref{sec:apply} to solve $V^n(\bfx)\bfc=\bfb$ for $1\leq n\leq 20$, where $V^n(\bfx)$ is the Bernstein--Vandermonde matrix associated to $\bfx$, the nodes $\bfx_j$ are randomly selected from $[j/(n+1),(j+1)/(n+1))$ for $0\leq j\leq n$, and $\bfb$ is a random vector in $[-1,1]^{n+1}$. B\'{e}zout refers to Corollary~\ref{cor:bezout}, DFT refers to Theorem~\ref{thm:factoredinverse}, $LU$ refers to $LU$ decomposition of $V^n$, and Newton refers to the Ainsworth--Sanchez algorithm. We use $\widehat{\bfc}$ to denote the computed solution.}
\label{fig:randominfo}
\end{figure}

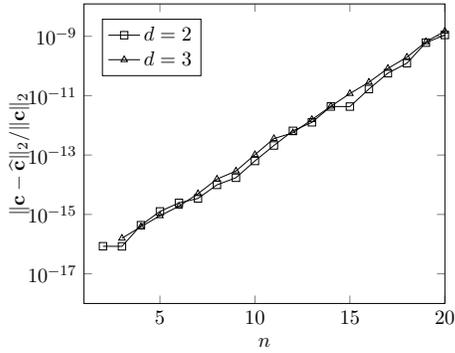
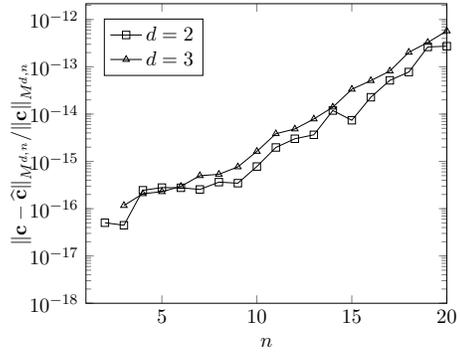
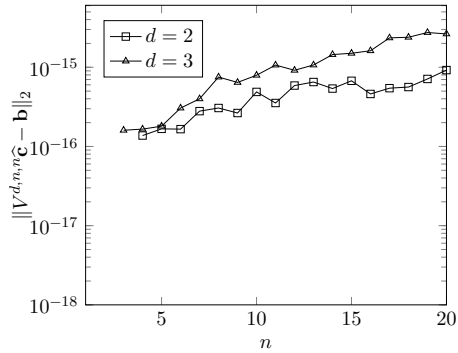
\begin{figure}
\centering
\begin{subfigure}{0.475\linewidth}
\centering
\begin{tikzpicture}[scale=0.7]
\begin{semilogyaxis}[legend style={at={(0.05,0.95)}, anchor=north west}, xlabel=$n$, ylabel=$\| \bfc-\widehat{\bfc} \|_2 / \| \bfc \|_2$, xmin=1, xmax=20, ymin=1e-18]
\addplot [mark=square] table [x=n, y=2dL2err, col sep=comma] {blockLU_info.dat};
\addplot [mark=triangle] table [x=n, y=3dL2err, col sep=comma] {blockLU_info.dat};
\legend{$d=2$, $d=3$}
\end{semilogyaxis}
\end{tikzpicture}
\caption{Error in the 2-norm.}
\label{subfig:block2err}
\end{subfigure}
\hfill
\begin{subfigure}{0.475\linewidth}
\centering
\begin{tikzpicture}[scale=0.7]
\begin{semilogyaxis}[legend style={at={(0.05,0.95)}, anchor=north west}, xlabel=$n$, ylabel=$\| \bfc-\widehat{\bfc} \|_{M^{d,n}} / \| \bfc \|_{M^{d,n}}$, xmin=1, xmax=20, ymin=1e-18]
\addplot [mark=square] table [x=n, y=2dMerr, col sep=comma] {blockLU_info.dat};
\addplot [mark=triangle] table [x=n, y=3dMerr, col sep=comma] {blockLU_info.dat};
\legend{$d=2$, $d=3$}
\end{semilogyaxis}
\end{tikzpicture}
\caption{Error in the $M^{d,n}$ norm.}
\label{subfig:blockMerr}
\end{subfigure}
\vskip\baselineskip
\begin{subfigure}{\linewidth}
\centering
\begin{tikzpicture}[scale=0.7]
\begin{semilogyaxis}[legend style={at={(0.05,0.95)}, anchor=north west}, xlabel=$n$, ylabel=$\| V^{d,n,n}\widehat{\bfc}-\bfb \|_2$, xmin=1, xmax=20, ymin=1e-18]
\addplot [mark=square] table [x=n, y=2dres, col sep=comma] {blockLU_info.dat};
\addplot [mark=triangle] table [x=n, y=3dres, col sep=comma] {blockLU_info.dat};
\legend{$d=2$, $d=3$}
\end{semilogyaxis}
\end{tikzpicture}
\caption{Residual in the 2-norm.}
\label{subfig:block2res}
\end{subfigure}
\caption{Error/residual in using the block $LU$ decomposition given in Theorem~\ref{thm:LU} to solve $V^{d,n,n}\bfc=\bfb$ for $1\leq n\leq 20$ and $d=2,3$, where $V^{d,n,n}$ is the Bernstein--Vandermonde matrix given in (\ref{eq:entries}) and $\bfb$ is a random vector in $[-1,1]^{\binom{n+d}{d}}$. We use $\widehat{\bfc}$ to denote the computed solution.}
\label{fig:blockinfo}
\end{figure}

\section{Conclusion}

We have studied several algorithms for the inversion of the univariate Bernstein--Vandermonde matrix. These algorithms, while less stable than algorithms discovered previously, provide insight into the structure of the Bernstein--Vandermonde matrix and are remarkably similar to algorithms derived for the Bernstein mass matrix. In addition, we have used a block $LU$ decomposition of the Bernstein--Vandermonde matrix corresponding to equispaced nodes on the $d$-simplex to give a recursive, block-structured algorithm with comparable accuracy to the one-dimensional algorithm. Moreover, we have given a new perspective on the conditioning of the Bernstein--Vandermonde matrix, indicating that the interpolation problem is better-conditioned with respect to the $L^2$ norm than the Euclidean norm. In the future, we hope to expand this perspective to other polynomial problems and continue the development of fast and accurate methods for problems involving Bernstein polynomials.

\clearpage

\bibliographystyle{plain}
\bibliography{references}
\end{document}